\DeclareMathAlphabet{\mymathbb}{U}{bbold}{m}{n}
\newtheorem{theorem}{Theorem}[section]
\newtheorem{prop}[theorem]{Proposition}
\newtheorem{lemma}[theorem]{Lemma}
\newtheorem{fact}[theorem]{Fact}
\newtheorem{coro}[theorem]{Corollary}
\theoremstyle{definition}
\newtheorem{definition}[theorem]{Definition}
\newtheorem{example}[theorem]{Example}
\newtheorem{remark}[theorem]{Remark}
\newcommand{\ts}{\hspace{0.5pt}}
\newcommand{\nts}{\hspace{-0.5pt}}
\newcommand{\RR}{\mathbb{R}\ts}
\newcommand{\CC}{\mathbb{C}\ts}
\newcommand{\NN}{\mathbb{N}}
\newcommand{\ZZ}{\mathbb{Z}}
\newcommand{\PP}{\mathbb{P}}
\newcommand{\cA}{\mathcal{A}}
\newcommand{\cE}{\mathcal{E}}
\newcommand{\cM}{\mathcal{M}}
\newcommand{\ee}{\ts\mathrm{e}}
\newcommand{\omb}{\ts\overline{\nts\omega\nts}\ts}
\newcommand{\ii}{\ts\mathrm{i}}
\newcommand{\dd}{\, \mathrm{d}}
\newcommand{\one}{\mymathbb{1}}
\newcommand{\nix}{\mymathbb{0}}
\newcommand{\ve}{\varepsilon}
\newcommand{\trans}{{\raisebox{1pt}{$\scriptscriptstyle \mathsf{T}$}}}
\newcommand{\exend}{\hfill$\Diamond$}
\newcommand{\diag}{\mathrm{diag}}
\newcommand{\comm}{\mathrm{comm}}
\newcommand{\tr}{\mathrm{tr}}
\newcommand{\Mat}{\mathrm{Mat}}
\newcommand{\bs}[1]{\boldsymbol{#1}}
\newcommand{\defeq}{\mathrel{\mathop:}=}
\newcommand{\myfrac}[2]{\frac{\raisebox{-2pt}{$#1$}}
  {\raisebox{0.5pt}{$#2$}}}
\begin{document}

\title{Embedding of reversible Markov matrices}

\author{Ellen Baake}
\address{Technische Fakult\"at, Universit\"at Bielefeld, 
         Postfach 100131, 33501 Bielefeld, Germany}
\email{ebaake@techfak.uni-bielefeld.de}

\author{Michael Baake}
\address{Fakult\"at f\"ur Mathematik, Universit\"at Bielefeld, 
         Postfach 100131, 33501 Bielefeld, Germany}
\email{mbaake@math.uni-bielefeld.de}
         
\author{Jeremy Sumner}         
\address{School of Natural Sciences, Discipline of Mathematics,
         University of Tasmania,
    \newline \indent PO Box 807, Sandy Bay, TAS 7006, Australia}
\email{Jeremy.Sumner@utas.edu.au}

\begin{abstract} 
  The embeddability of reversible Markov matrices into
  time-homogeneous Markov semigroups is revisited, with
  some focus on simplifications and extensions. In particular,
  we do not demand irreducibility and consider weakly reversible 
  matrices as well as reversible matrices with negative eigenvalues.
\end{abstract}

\keywords{Markov matrices and generators, reversibility, embedding problem}
\subjclass{60J10, 60J27, 15A30}

\maketitle

\section{Introduction}

A \emph{Markov matrix} is a square matrix $M$ with non-negative
entries and unit row sums, while a \emph{Markov generator} $Q$ has
non-negative off-diagonal entries and zero row sums. The latter is
also known as a \emph{rate matrix}, and generates a 
continuous-time semigroup of Markov matrices via 
$\{ \ee^{t \ts Q} : t\geqslant 0 \}$. Let $\cM_d$ denote the convex 
set of all $d \ts {\times} d$ Markov matrices; see \cite{Norris} for 
general background. It is an old and still only partially answered 
question \cite{Elfving, King} to determine which Markov matrices
are \emph{embeddable}, which means that they occur in some Markov
semigroup of the above type. This is the time-homogeneous situation
because $\ee^{t \ts Q}$ is the solution to the matrix-valued initial value
problem $\dot{X} = X Q$ with $X(0) = \one$. This embedding is 
intimately related with the existence of a real matrix logarithm of $M$, 
meaning a real solution $R$ of the equation $M = \ee^R$, which may 
or may not be unique.  The latter, in turn, may or may not be relevant
in a given context. Here, we consider the embedding problem for a 
large but important class of Markov matrices \cite{LP} as follows.

A Markov matrix $M \in \cM_d$ is called \emph{reversible} if there
exists a strictly positive probability vector
$\bs{p} = (p^{}_{1}, \ldots , p^{}_{d} )$ such that the detailed
balance equations $p_i M_{ij} = p_j M_{ji}$ hold for all
\mbox{$1 \leqslant i,j \leqslant d$}; see \cite[Ch.~1]{Kelly} for
background. In this case, $\bs{p}$ must satisfy $\bs{p} M = \bs{p}$
and thus be a left eigenvector of $M$ for the eigenvalue $1$, that is,
$\bs{p}$ is an equilibrium vector for $M$.  Reversible Markov matrices
play an important role both in the theory and in many applications of
Markov chains \cite{LP}; this is due to the invariance under time reversal, 
and the fact that the detailed balance property allows for a
straightforward and explicit calculation of $\bs{p}$.
One concrete motivation for considering reversible Markov chains 
comes from the study of phylogenetics, which encompasses the inference 
of biological evolutionary history from present-day molecular sequence 
data. Here, the most common approach to statistical parameter 
inference works with a hierarchy of reversible Markov models with the 
most general case at the top; see any standard reference such as 
\cite[Ch.~1]{Yang} for further details.  

Questions of embeddability are particularly important in applications
because data are usually sampled at discrete time points, but the
underlying phenomena often operate in continuous time. When working
with Markov matrices, special attention should thus be paid to their
embeddability; however, this has sometimes been overlooked. For
example, in bioinformatics, one of the famous Dayhoff (or PAM) 
matrices \cite{Day-1,Day-2}, a family of reversible Markov matrices that
describe the evolution of amino acids and are routinely used as
scoring matrices for protein sequence alignment (see
\cite[Ch.6.5]{EG} for review), has turned out to not be embeddable
\cite{KG} --- although molecular evolution undoubtedly acts in
continuous time.

Since irreducible Markov matrices have a unique eigenvector
$\bs{p}>\bs{0}$ by the Perron--Frobenius theorem, it is natural to
consider the embedding problem first for irreducible reversible
matrices. This case has been analysed in \cite{Chen} under the further
restriction to reversible generators, where a fairly complete answer
was given, which can still be reformulated in a more algebraic manner
and then be simplified. Inevitably, this only covers reversible Markov 
matrices with positive spectrum.  However, as we shall see below, there 
are reversible Markov matrices with (pairs of) negative eigenvalues, which
calls for some extension.  Also, there is no need to restrict to
irreducible matrices from the beginning, so we will drop this
restriction, too.
\smallskip

The paper is organised as follows. In Section~\ref{sec:prelim}, we 
provide some definitions and preparatory results, mainly from a linear 
algebra perspective.  Then, in Section~\ref{sec:main}, we focus on
reversible Markov matrices and their embedding, simplifying the
approach of \cite{Chen} and extending the discussion to
reducible, weakly reversible matrices, and to cases with 
negative eigenvalues.

In Appendix \ref{sec:app}, we give an interpretation of one embeddable
class of Markov matrices (for $d=3$) with a pair of negative
eigenvalues in terms of cyclic processes, which provides a simpler and
independent approach to this class of matrices.

\section{Preliminaries and preparatory results}\label{sec:prelim}

The real $d\ts{\times}d$ matrices are denoted as $\Mat(d,\RR)$, and
the spectrum of a matrix $B$ is written as $\sigma (B)$, which is
understood as the set or multi-set of eigenvalues, possibly \emph{including}
multiplicities.  When the eigenvalues are distinct, the spectrum is
called \emph{simple}. Further, we speak of \emph{real} or (strictly)
\emph{positive} spectrum when $\sigma(B)\subset \RR$ or
$\sigma(B) \subset \RR_{+}$, respectively, with obvious meaning of 
the inclusion relation in the multi-set case.

In view of later applications to Markov matrices and generators, we
mainly work with row vectors in $\RR^d$, denoted by
$\bs{x} = (x^{}_{1}, \ldots , x^{}_{d})$, while we use
$\bs{x}^{\trans}$ for column vectors. We call $\bs{x}$ strictly
positive, written as $\bs{x} > \bs{0}$, when $x_i >0$ holds for all
$1\leqslant i\leqslant d$.

\begin{definition}\label{def:p-pair}
  Let\/ $\bs{p} = (p^{}_{1}, \ldots , p^{}_{d})$ be a strictly
  positive probability vector, so\/ $p_i > 0$ for all\/
  $1\leqslant i \leqslant d$ and\/ $\sum_{i=1}^{d} p_i = 1$. Then, two
  matrices\/ $A, B \in \Mat(d, \RR)$ form a \emph{$\bs{p}$-balanced
    pair} if the exchange relations\/ $p_i A_{ij} = p_j B_{ji}$ hold
  for all\/ $1\leqslant i,j \leqslant d$.
\end{definition}

No specific relation between $\bs{p}>\bs{0}$ and the matrices $A$ and
$B$ is assumed, but it is easy to verify that $\bs{p}$ is a left
eigenvector of $A$ (of $B$) with eigenvalue $\lambda$ if and only if
all row sums of $B$ (of $A$) are $\lambda$, equivalently if and only
if $\bs{1}^{\!\trans} = (1, \ldots , 1)^{\nts\trans}$ is a right
eigenvector of $B$ (of $A$) with eigenvalue $\lambda$. The exchange
relations can be written in matrix form as
\begin{equation}\label{eq:T-def}
  B \, = \, D^{-1}\nts A^{\trans} \ts D \qquad \text{with} \quad
  D \, = \, D_{\bs{p}} \, \defeq \, \diag ( p^{}_{1}, \ldots , p^{}_{d} ) \ts .
\end{equation}
A motivation for this particular form will follow shortly.  Note that
$D$ possesses a unique matrix square root with positive entries on the
diagonal, as does $D^{-1}$. Clearly, \eqref{eq:T-def} is equivalent with
$D B = A^{\trans} D$, which can still be used when
$\bs{p} \geqslant \bs{0}$ and $D$ is no longer invertible.

\begin{remark}\label{rem:involution}
  When $\bs{p} > \bs{0}$, the mapping
  $A \mapsto \widetilde{A} \defeq D^{-1}\nts A^{\trans} \ts D$ 
  defines an involution on $\Mat (d, \RR)$ that can be seen 
  as the adjoint map for the right action of $A$ in the bilinear 
  form $\sum_{i=1}^{d} x^{}_i \ts p^{}_i \ts y^{}_i$, in complete
  correspondence with the transpose of a matrix being the adjoint map
  for the standard bilinear form $\sum_{i=1}^{d} x^{}_i \ts
  y^{}_i$. The importance of the new involution will become clear in
  the context of reversibility for Markov and related processes.
  
  In the context of stationary Markov chains, both in discrete and
  in continuous time, this involution is induced by the time reversal of
  the chain; see \cite[Ch.~1.9]{Norris} or \cite[Ch.~1.2]{Kelly} for
  background. Our definition is algebraic in nature and applies
  to a more general setting, which will pay off later in our analysis.
  \exend
\end{remark}
 
Of particular interest in Definition~\ref{def:p-pair} is the case
$B=A$, where the exchange relations are called the equations
of \emph{detailed balance}. We use this term in slightly greater
generality than usual for ease of accommodating Markov matrices, 
generators and some of their extensions under a common roof. 

\begin{definition}\label{def:reversible}
  A matrix\/ $B\in\Mat (d,\RR)$ is called \emph{reversible for the
    probability vector}\/ $\bs{p}>\bs{0}$, or\/
  $\bs{p}\ts\ts$-reversible for short, if it satisfies the detailed
  balance equations\/ $p_i B_{ij} = p_j B_{ji}$ for all\/
  $1\leqslant i,j \leqslant d$, or, equivalently, if\/
  $B = D^{-1} \nts B^{\trans} \ts D$ with the diagonal matrix\/ $D$
  from \eqref{eq:T-def}. Further, $B$ is simply called
  \emph{reversible} if it is\/ $\bs{p}\ts\ts$-reversible for some\/
  $\bs{p}>\bs{0}$.
\end{definition}

For a $\bs{p}\ts\ts$-reversible $B$, we see that $\bs{p}$ is a left
eigenvector with eigenvalue $\lambda$ if and only if
$\bs{1}^{\nts\trans}$ is a right eigenvector for $\lambda$. Also,
since $D^{1/2}$ is symmetric, $\bs{p}\ts\ts$-reversibility of $B$
implies
\[
  \bigl( D^{1/2} \nts B \ts D^{-1/2} \bigr)^{\trans}  = \,
  D^{-1/2} \nts  B^{\trans} \ts D^{1/2}  \, = \, D^{1/2} \nts B \ts D^{-1/2} ,
\]
as well as the analogous equation for positive powers of $B$, which 
shows the following important property.

\begin{fact}\label{fact:symm}
  Let\/ $B\in\Mat (d,\RR)$ be reversible for the probability vector\/
  $\bs{p}>0$. Then, $B$ is similar to a symmetric matrix, hence
  diagonalisable, and has real spectrum, $\sigma (B) \subset \RR$.
  Further, $B^n$ is reversible for any\/ $n\in\NN$. \qed
\end{fact}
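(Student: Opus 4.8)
The plan is to diagonalise the similarity that is displayed immediately before the statement. Set $D^{1/2} = \diag ( \sqrt{p^{}_1}, \ldots, \sqrt{p^{}_d}\ts )$, which is well defined and invertible because $\bs{p} > \bs{0}$, and introduce the conjugate matrix $S \defeq D^{1/2} \nts B \ts D^{-1/2}$. The computation preceding the Fact shows precisely that $S^{\trans} = S$, so $S$ is a real symmetric matrix, and $B = D^{-1/2} \nts S \ts D^{1/2}$ exhibits $B$ as similar to $S$.

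From here, the spectral theorem for real symmetric matrices does the work: $S$ is orthogonally diagonalisable and has only real eigenvalues. Since similarity preserves both diagonalisability and the spectrum (including multiplicities), $B$ is diagonalisable and $\sigma (B) = \sigma (S) \subset \RR$, which gives the first two assertions.

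For the statement about powers, I would exploit that conjugation is multiplicative. Writing out $S^n = (D^{1/2} \nts B \ts D^{-1/2})^n = D^{1/2} \nts B^n D^{-1/2}$, where the interior factors $D^{-1/2} D^{1/2} = \one$ collapse, one sees that $S^n$ is again symmetric as a power of a symmetric matrix. Hence $(D^{1/2} \nts B^n D^{-1/2})^{\trans} = D^{1/2} \nts B^n D^{-1/2}$, and multiplying this identity by $D^{1/2}$ on both sides (using that $D^{1/2}$ is symmetric) rearranges to $D \ts B^n = (B^n)^{\trans} D$, equivalently $B^n = D^{-1} (B^n)^{\trans} D$. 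This is exactly the $\bs{p}\ts$-reversibility of $B^n$ in the form of Definition~\ref{def:reversible}. Alternatively, one may simply note that $\bs{p}\ts$-reversibility of $B$ is equivalent to symmetry of $S$, and this property is manifestly inherited by every power $S^n$.

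I expect no serious obstacle: once the correct conjugating matrix $D^{1/2}$ is identified, everything reduces to the spectral theorem and elementary bookkeeping. The only point requiring a little care is to track the two-sided conjugation when passing to powers, so that the cancellation of the interior $D^{\pm 1/2}$ factors is carried out correctly.
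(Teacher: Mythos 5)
Your proof is correct and follows essentially the same route as the paper: the displayed identity preceding the Fact already shows that $D^{1/2} B \ts D^{-1/2}$ is symmetric, and the paper likewise handles the powers $B^n$ via the analogous conjugation identity, exactly as in your telescoping argument. Nothing is missing; your write-up merely makes the spectral-theorem step and the unwinding to $D\ts B^n = (B^n)^{\trans} D$ explicit.
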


We will be interested in reversible matrices $B$ that possess a real
matrix logarithm with particular properties, that is, in solutions of
$B=\ee^R$ with $R\in\Mat(d,\RR)$ and possibly further
restrictions. Let us first recall Culver's results \cite{Culver} on
the existence and uniqueness of real logarithms, formulated in terms
of the (complex) \emph{Jordan normal form} (JNF) of $B$.

\begin{fact}[Culver]\label{fact:Culver}
  A matrix\/ $B\in\Mat(d,\RR)$ has a real logarithm if and only if\/
  $B$ is non-singular and has the property that, in the JNF of\/ $B$,
  every elementary Jordan block with a negative eigenvalue occurs with
  even multiplicity.

  Further, $B$ has a unique real logarithm if and only if all
  eigenvalues of\/ $B$ are positive real numbers and no elementary
  Jordan block of the JNF of\/ $B$ occurs more than once. If\/ $B$ is
  diagonalisable, this is equivalent to\/ $B$ having simple, positive
  spectrum.  \qed
\end{fact}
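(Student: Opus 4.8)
The plan is to pass to the (complex) Jordan normal form and to analyse the exponential map one elementary block at a time. The engine of the whole argument is a block-preservation lemma: for every $\mu\in\CC$ and every $m\in\NN$, the exponential $\ee^{J_m(\mu)}$ of a single elementary Jordan block is similar to the single block $J_m(\ee^\mu)$. This holds because $\exp$ is analytic with non-vanishing derivative $\ee^\mu$ at $\mu$ and therefore does not lower the nilpotency index; consequently $\exp$ sets up, for a real $R$ and its exponential $B=\ee^R$, a size-preserving correspondence between the elementary blocks of $R$ at $\mu$ and those of $B$ at $\ee^\mu$.

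For the existence statement, necessity of non-singularity is immediate from $\det B = \ee^{\tr R}>0$. For the block condition I would complexify and use that the spectrum of a real $R$ is invariant under complex conjugation, real eigenvalues of $R$ giving positive eigenvalues of $B$. A negative $\lambda\in\sigma(B)$ can only be reached from some $\mu$ with $\ee^{\mu}=\lambda$, which forces the imaginary part of $\mu$ to be an odd multiple of $\pi$, hence $\mu\notin\RR$; then $\bar\mu$ is also an eigenvalue of $R$, with identical Jordan structure and $\ee^{\bar\mu}=\lambda$. Since the preimages of $\lambda$ under $\exp$ are non-real and pair up under conjugation with no fixed point, the block-preservation lemma makes the elementary blocks of $B$ at $\lambda$ split into equal-size pairs, which is the asserted even multiplicity. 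For sufficiency I would construct $R$ over $\RR$ blockwise in the real Jordan form: a positive block receives the real principal logarithm $(\log\lambda)\one_m+\log(\one_m+N/\lambda)$ through the terminating series; a conjugate pair of complex blocks is handled by taking a complex logarithm of one block and reassembling the real dilation--rotation form; and a negative eigenvalue of even multiplicity is treated by pairing two equal blocks and writing $\lambda=\lvert\lambda\rvert\,\ee^{\pm\ii\pi}$, realising the rotation by $\pi$ through the real generator whose exponential is $-\one_2$, namely $\exp\!\left(\begin{smallmatrix}0&-\pi\\\pi&0\end{smallmatrix}\right)=-\one_2$.

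For uniqueness, sufficiency is quick: positive spectrum forces every admissible $R$ to have real spectrum, because a non-real eigenvalue of $R$ would exponentiate to a non-real or negative eigenvalue of $B$; hence $R$ must be the primary matrix function given by the real principal branch on each block, and if no elementary divisor is repeated this primary function is the only logarithm. For necessity I would argue by contraposition and exhibit a second real logarithm. If $B$ has a non-real or negative eigenvalue $\lambda$, it is reached by a conjugate pair $\mu,\bar\mu$ of eigenvalues of any real $R$, and replacing $(\mu,\bar\mu)$ by $(\mu+2\pi\ii,\bar\mu-2\pi\ii)$ produces a different real logarithm. If every eigenvalue is positive but some elementary divisor $J_m(\lambda)$ occurs at least twice, then on the two copies I would add to the principal logarithm the commuting real generator $2\pi\left(\begin{smallmatrix}0&-1\\1&0\end{smallmatrix}\right)\otimes\one_m$, which exponentiates to the identity and is non-zero, again giving a distinct real logarithm.

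I expect the repeated-divisor case in the necessity part of uniqueness to be the main obstacle, because it is a genuinely non-primary phenomenon rather than a branch choice: one has to check that the added generator really commutes with the principal logarithm of the repeated block, that the modified $R$ stays real, and that it differs from the principal branch. The existence-sufficiency construction for negative eigenvalues, resting on the identity $\exp\!\left(\begin{smallmatrix}0&-\pi\\\pi&0\end{smallmatrix}\right)=-\one_2$, is the other point requiring care, but it is essentially mechanical once that identity and the block-preservation lemma are in place.
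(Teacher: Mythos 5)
The paper itself does not prove this statement: it is quoted as Fact~\ref{fact:Culver} directly from Culver's article \cite{Culver}, so your proposal has to be judged on its own correctness rather than against an in-paper argument. Your overall architecture --- the block-preservation lemma for $\exp$ on elementary Jordan blocks, conjugation symmetry of the Jordan structure of a real matrix, and blockwise construction of logarithms in the real Jordan form --- is the standard route, and it handles the existence statement in both directions, as well as the necessity half of the uniqueness statement, correctly.

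There is, however, a genuine error in the sufficiency half of the uniqueness statement. You claim that positive spectrum of $B$ forces every real logarithm $R$ to have real spectrum, ``because a non-real eigenvalue of $R$ would exponentiate to a non-real or negative eigenvalue of $B$''. That implication is false: a non-real $\mu$ with $\mathrm{Im}\,\mu \in 2\pi\ZZ\setminus\{0\}$ exponentiates to the positive real number $\ee^{\mathrm{Re}\,\mu}$. Concretely, $R = 2\pi\left(\begin{smallmatrix} 0 & -1 \\ 1 & 0 \end{smallmatrix}\right)$ is a real logarithm of $\one$ in $\Mat(2,\RR)$ with spectrum $\{\pm 2\pi \ii\ts\}$, although $\one$ has positive spectrum; in fact, your own construction in the necessity part (adding $2\pi\left(\begin{smallmatrix}0&-1\\1&0\end{smallmatrix}\right)\otimes\one_m$ to the principal logarithm of a repeated block) manufactures exactly such logarithms, so the proposal contradicts itself at this point. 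The statement you need is true, but it requires \emph{both} hypotheses simultaneously, via the same pairing argument you use elsewhere: if a real $R$ with $\ee^R = B$ had a non-real eigenvalue $\mu$, then $\bar\mu \ne \mu$ would also be an eigenvalue of $R$ with identical block sizes, and since the eigenvalue $\lambda = \ee^{\mu}$ of $B$ is positive by hypothesis, one gets $\ee^{\bar\mu} = \overline{\ee^{\mu}} = \lambda$ as well; the block-preservation lemma then forces two elementary blocks of equal size at $\lambda$ in the JNF of $B$, contradicting the assumption that no elementary block is repeated. Only after this does $\sigma(R)\subset\RR$ follow, whereupon $R = \log(\ee^R) = \log (B)$ by the composition rule for primary matrix functions (the spectrum of $R$ lies in the strip $\lvert \mathrm{Im}\, z\rvert < \pi$), which gives uniqueness. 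With that repair --- and noting that the final sentence of the statement is immediate because all blocks of a diagonalisable matrix are $1\times 1$, so non-repetition of blocks means simple spectrum --- your proof is complete.
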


When $B$ is diagonalisable with positive spectrum but repeated
eigenvalues, it has uncountably many real logarithms, as also
discussed in \cite{Culver}. Such a non-uniqueness is relevant for the
following reason.

\begin{lemma}\label{lem:dichotomy}
  Let\/ $B\in\Mat(d,\RR)$ satisfy\/ $B=\ee^Q = \ee^R$ with\/
  $Q,R \in \Mat(d,\RR)$, and consider\/
  $\varTheta \defeq \{ t \in \RR : \ee^{t \ts Q} = \ee^{t R}
  \}$. Then, either\/ $\varTheta = \RR$, which is equivalent with\/
  $Q=R$, or\/ $\varTheta \subset \RR$ is locally finite, which means
  that\/ $\varTheta \cap [a,a+r]$ is a finite set for every\/
  $a\in\RR$ and $r>0$.
\end{lemma}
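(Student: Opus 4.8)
The plan is to exploit the analyticity of the matrix exponential in the variable $t$. As a preliminary observation, note that $\varTheta$ is nonempty, since it contains both $0$ and $1$ by hypothesis. The crucial point is that, for each pair of indices $(i,j)$, the scalar function $t \mapsto \bigl(\ee^{tQ} - \ee^{tR}\bigr)_{ij}$ is the restriction to $\RR$ of an entire function: each entry of $\ee^{tQ}$ and of $\ee^{tR}$ is given by a power series in $t$ with infinite radius of convergence, hence is real-analytic on all of $\RR$.

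Next I would invoke the identity theorem for analytic functions. For each $(i,j)$, the real zero set
\[
  Z_{ij} \, \defeq \, \{ t \in \RR : (\ee^{tQ} - \ee^{tR})_{ij} = 0 \}
\]
is therefore either all of $\RR$, namely when the corresponding entry vanishes identically, or it is a locally finite (discrete) subset of $\RR$. Since $\varTheta = \bigcap_{i,j} Z_{ij}$, this yields the dichotomy at once: if every $Z_{ij} = \RR$, then $\varTheta = \RR$; otherwise at least one $Z_{ij}$ is locally finite, and then $\varTheta \subseteq Z_{ij}$ is locally finite as well, because any subset of a locally finite set is again locally finite.

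It remains to identify the first alternative with the condition $Q = R$. The implication $Q = R \Rightarrow \varTheta = \RR$ is immediate. For the converse, assuming $\ee^{tQ} = \ee^{tR}$ for all $t \in \RR$, I would differentiate both sides at $t = 0$, using that $\frac{\mathrm{d}}{\mathrm{d} t}\big|_{t=0}\, \ee^{tQ} = Q$ and likewise for $R$, to conclude $Q = R$.

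I do not anticipate a serious obstacle: the argument reduces essentially to the observation that the entries of $\ee^{tQ} - \ee^{tR}$ are real-analytic functions of $t$. The only point that warrants a little care is the logical structure of the dichotomy, that is, checking that the two alternatives are exhaustive and that local finiteness of $\varTheta$ is inherited as soon as a single entry has a discrete zero set.
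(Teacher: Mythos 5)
Your proof is correct, and it reaches the dichotomy by a somewhat different route than the paper. You reduce everything to scalar functions: each entry of $\ee^{tQ}-\ee^{tR}$ extends to an entire function of $t$, so by the identity theorem its real zero set is either all of $\RR$ or locally finite, and $\varTheta$, being the intersection of the $d^2$ entrywise zero sets, inherits the dichotomy (your observation that a subset of a locally finite set is locally finite is exactly what makes this reduction work); the identification of $\varTheta=\RR$ with $Q=R$ then follows by differentiating at $t=0$. The paper instead works with $\varTheta$ directly: it notes that $\varTheta$ is closed by continuity of the two exponential maps, supposes $\varTheta$ contains distinct points $t_m$ converging to some $t_0$ (which then also lies in $\varTheta$), and compares the series expansions around $t_0$ to obtain $Q = R + \mathcal{O}(t_m - t_0)$ for every $m$, hence $Q=R$; thus $Q\neq R$ forces $\varTheta$ to have no accumulation point, i.e.\ to be locally finite. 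The analytic core is the same in both arguments, but yours is more modular, outsourcing the key step to a named theorem about entire functions and thereby avoiding any hand-estimation of remainders, while the paper's is self-contained, using nothing beyond the power series of the exponential and an invertibility cancellation, at the cost of a terser limit argument.
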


\begin{proof}
  Obviously, $Q=R$ implies $\varTheta = \RR$, while the converse
  emerges from $B(t) = \ee^{t \ts Q}$ via $Q = \dot{B} (0)$, so the
  equivalence of $Q=R$ and $\varTheta = \RR$ is clear.
   
  Now, observe that $\varTheta$ is a closed subset of $\RR$, as
  follows from the continuity of the matrix exponential maps
  $t\mapsto \ee^{t \ts Q}$ and $t\mapsto \ee^{tR}$. Assume that
  $\varTheta$ contains a sequence $(t_m)^{}_{m\in\NN}$ of distinct
  points that converge in $\RR$, so
  $\lim_{m\to\infty} t_m = t^{}_{0}$. Then, $t^{}_{0} \in \varTheta$
  as well, which is to say that we also have
  $\ee^{t^{}_{0} \ts Q} = \ee^{t^{}_{0} R}$.
   
  Now, we can compare the series expansions around $t^{}_{0}$, which
  result in the matrix relation
  $ Q = R + \mathcal{O} (t_m - t^{}_{0} ) $ for every $m\in\NN$,
  interpreted elementwise. As $t_m$ converges towards $t^{}_{0}$, this
  is only possible if $Q=R$. So, when $Q\ne R$, the set $\varTheta$
  cannot contain any limit point, and must thus be locally finite,
  hence discrete and closed.
\end{proof}

Let us return to Culver's result.  Indeed, matrices with negative
eigenvalues can have a real logarithm, and this situation will still
occur in the realm of reversible matrices; see Fact~\ref{fact:roots} 
below.  However, if any of the
logarithms is to be reversible, one hits the following constraint from
the \emph{spectral mapping theorem} (SMT); compare
\cite[Thm.~9.4.6]{Lan}.

\begin{fact}\label{fact:rev-exp}
  If\/ $R\in\Mat(d,\RR)$ is reversible for\/ $\bs{p}>\bs{0}$, its
  exponential\/ $\ee^R$ is\/ $\bs{p}\ts\ts$-reversible as well, hence
  diagonalisable, and has positive spectrum,
  $\sigma (\ee^R) \subset \RR_{+}$.
\end{fact}

\begin{proof}
  If $R$ is $\bs{p}\ts\ts$-reversible, it is diagonalisable with
  $\sigma(R)\subset \RR$ by Fact~\ref{fact:symm}. All powers of $R$
  and their linear combinations are $\bs{p}\ts\ts$-reversible as well,
  hence also $\ee^R$, the latter by a standard continuity argument. 
  Here, $\ee^R$ is diagonalisable (by the same matrix as used for $R$), 
  and $\sigma(\ee^R) = \{ \ee^{\lambda} : \lambda \in \sigma(R)\} 
  \subset \RR_{+}$ is clear.
\end{proof}

Let us pause for a comment on the set of $\bs{p}\ts\ts$-reversible
matrices, defined as
\begin{equation}\label{eq:def-Ap}
    \cA_{\ts\bs{p}} \, \defeq \; \{ A \in \Mat (d,\RR) :
    p_i A_{ij} = p_j A_{ji} \text{ holds for all } 
    1\leqslant i,j \leqslant d \ts \}  \ts ,
\end{equation}
which contains $\one$ and is (topologically) closed. 
If $A,B \in \cA_{\ts\bs{p}}$, it is 
clear that also any real linear combination
of $A$ and $B$ lies in $\cA_{\ts\bs{p}}$. Moreover, for any 
$1\leqslant i,j \leqslant d$, we get
\begin{equation}\label{eq:Jordan}
   p^{}_i (AB)^{}_{ij} \, =  \sum_{k=1}^{d} p^{}_i A^{}_{ik} B^{}_{kj}
   \, =  \sum_{k=1}^{d} p^{}_k A^{}_{ki} B^{}_{kj} \, = 
   \sum_{k=1}^{d} p^{}_{j} B^{}_{jk} A^{}_{ki} \, = \, 
   p^{}_{j} (B\nts A)^{}_{ji} \ts ,
\end{equation}
which interchanges the matrix order. It then follows that
$AB + B\nts A$ is $\bs{p}\ts\ts$-reversible again, so that
$\cA_{\ts\bs{p}}$ is a real Jordan algebra, in line with the results 
from \cite{CS}. Note that we did not need $\bs{p}$ to be strictly positive 
in \eqref{eq:Jordan}, so $\cA_{\ts\bs{p}}$ can also be considered
for any $\bs{p}\geqslant \bs{0}$ via the detailed balance 
equations. Put together, we thus have the following result.

\begin{fact}\label{fact:p-alg}
  For any fixed probability vector\/
  $\bs{p} = (p^{}_{1} , \ldots , p^{}_{d} )$, the set\/
  $\cA_{\ts\bs{p}}$ from \eqref{eq:def-Ap}, with the product\/
  $A\circ\nts B \defeq \frac{1}{2} (AB + B\nts A)$, is a unital Jordan
  algebra.  \qed
\end{fact}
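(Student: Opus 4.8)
The plan is to verify the three defining axioms of a unital Jordan algebra for $(\cA_{\ts\bs{p}}, \circ)$ with $A\circ B = \frac{1}{2}(AB+BA)$: namely, that it is a real vector space carrying a commutative bilinear product which satisfies the Jordan identity $(A^{2}\circ B)\circ A = A^{2}\circ(B\circ A)$, where $A^{2}=A\circ A$, and that it possesses a unit. Most of the ingredients have effectively been assembled in the discussion preceding the statement, so the proof is mainly a matter of organising these observations in the right order.

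First, I would record that $\cA_{\ts\bs{p}}$ is a real linear subspace of $\Mat(d,\RR)$, since any real linear combination of $\bs{p}\ts\ts$-reversible matrices again satisfies the detailed balance equations. Next comes closure under $\circ$: applying \eqref{eq:Jordan} to the ordered pair $(A,B)$ gives $p^{}_i (AB)^{}_{ij} = p^{}_j (BA)^{}_{ji}$, and applying it to $(B,A)$ gives $p^{}_i (BA)^{}_{ij} = p^{}_j (AB)^{}_{ji}$; adding these two relations yields $p^{}_i (AB+BA)^{}_{ij} = p^{}_j (AB+BA)^{}_{ji}$, so that $A\circ B \in \cA_{\ts\bs{p}}$. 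Commutativity of $\circ$ is immediate from the manifest symmetry of $\frac{1}{2}(AB+BA)$ in $A$ and $B$, and bilinearity is clear. None of these steps needs $\bs{p}>\bs{0}$, which is consistent with the remark that $\cA_{\ts\bs{p}}$ makes sense for any $\bs{p}\geqslant\bs{0}$.

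The only axiom that is not purely formal is the Jordan identity, and the key observation that makes it painless is that I need not verify it directly on $\cA_{\ts\bs{p}}$. The full matrix algebra $\Mat(d,\RR)$, equipped with the anticommutator product $\circ$, is the prototypical special Jordan algebra, so the Jordan identity holds identically for all real $d\times d$ matrices; this is the standard fact that any associative algebra becomes a Jordan algebra under $A\circ B = \frac{1}{2}(AB+BA)$. Since $\cA_{\ts\bs{p}}$ is a subspace of $\Mat(d,\RR)$ that is closed under $\circ$ by the previous step, it inherits the Jordan identity automatically, as a universally quantified identity valid in the ambient algebra holds a fortiori on any $\circ$-closed subspace. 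Finally, $\one\in\cA_{\ts\bs{p}}$ and $A\circ\one = \frac{1}{2}(A\one + \one A) = A$, so $\one$ is a two-sided unit for $\circ$, making $\cA_{\ts\bs{p}}$ unital.

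In this approach there is no genuine obstacle: embedding $\cA_{\ts\bs{p}}$ into the special Jordan algebra $(\Mat(d,\RR),\circ)$ reduces everything to the closure computation \eqref{eq:Jordan}, which interchanges the matrix order and is the one place where the detailed balance structure is actually used. Were one to insist on a self-contained check of $(A^{2}\circ B)\circ A = A^{2}\circ(B\circ A)$ without invoking the associative embedding, the multilinear bookkeeping would be the most tedious part, but the embedding renders this unnecessary.
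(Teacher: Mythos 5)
Your proposal is correct and follows essentially the same route as the paper: the paper's argument is precisely the discussion preceding the statement, namely closure under real linear combinations, the computation \eqref{eq:Jordan} applied symmetrically to show $AB+B\nts A\in\cA_{\ts\bs{p}}$, the observation that $\one\in\cA_{\ts\bs{p}}$, and the (implicit) fact that a $\circ$-closed subspace of $\Mat(d,\RR)$ inherits the Jordan identity from the ambient special Jordan algebra. Your write-up merely makes that last inheritance step explicit, which is a fair rendering of what the paper leaves tacit.
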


In fact, $\cA_{\ts\bs{p}}$ being closed under taking squares is 
equivalent with it being a Jordan algebra, because 
$A^2 =  A \circ \nts A $ and $A \circ \nts B  = \frac{1}{2} \bigl(
(A+ \nts B)^2 - A^2 - B^2\bigr)$. In either case, one then also
gets closure under taking any positive powers, as one sees
from $A^{n+1} = A \circ \nts A^n$ inductively, again for any
probability vector $\bs{p}$. We shall return to this in the
context of Markov matrices.

Now, if we are interested in reversible matrices with a reversible
real logarithm, Fact~\ref{fact:rev-exp} implies that we need to
restrict to matrices with positive spectrum. If $B\in\Mat(d,\RR)$ has
simple, positive spectrum, Fact~\ref{fact:Culver} asserts that there
is precisely one real logarithm, and this is the principal one. Since
such a $B$ is diagonalisable, we have
$B = U \diag (\lambda^{}_{1}, \ldots , \lambda^{}_{d}) \ts U^{-1}$ for
some invertible real matrix $U$. Then, we simply get the principal
logarithm as
\[
    \log (B) \, = \, U \diag \bigl( \log (\lambda_1) , \ldots,
        \log (\lambda_d) \bigr) \ts U^{-1} .
\]
When the spectrum possesses non-trivial multiplicities, there are 
several real logarithms, but we still have the following result.

\begin{lemma}\label{lem:principal}
  Let\/ $B \in \Mat(d,\RR)$ be\/ $\bs{p}\ts\ts$-reversible for\/
  $\bs{p}>\bs{0}$. If\/ $B$ has positive spectrum, its principal
  logarithm is a real matrix that is\/ $\bs{p}\ts\ts$-reversible as
  well.
\end{lemma}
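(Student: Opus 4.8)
The plan is to reduce to the symmetric case, where the principal logarithm is transparent, and then to transport the conclusion back by a similarity transformation. Since $B$ is $\bs{p}\ts\ts$-reversible, the computation displayed just before Fact~\ref{fact:symm} shows that $S \defeq D^{1/2} B \ts D^{-1/2}$ is symmetric, where $D = D_{\bs{p}}$ and $D^{1/2} = \diag(\sqrt{p^{}_1}, \ldots, \sqrt{p^{}_d})$. By Fact~\ref{fact:symm}, $B$ is diagonalisable with real spectrum, and the same is then true of $S$, which is similar to $B$; together with the hypothesis of positive spectrum, $S$ is thus symmetric positive definite.

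First I would record that such an $S$ has a symmetric principal logarithm: diagonalising $S = O \ts \Lambda \ts O^{\trans}$ with $O$ orthogonal and $\Lambda = \diag(\lambda^{}_1, \ldots, \lambda^{}_d)$, all $\lambda^{}_i > 0$, one sets $\log(S) = O \diag(\log \lambda^{}_1, \ldots, \log \lambda^{}_d) \ts O^{\trans}$, which is real and symmetric. Next I would define $R \defeq D^{-1/2} \log(S) \ts D^{1/2}$ and check that $R = \log(B)$: since $B = D^{-1/2} S \ts D^{1/2}$, one has $\ee^{R} = D^{-1/2} \ee^{\log(S)} D^{1/2} = D^{-1/2} S \ts D^{1/2} = B$, while the eigenvalues of $R$ agree with those of $\log(S)$, namely the real numbers $\log \lambda^{}_i$, which lie in the strip that characterises the principal branch. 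Finally, $R$ is $\bs{p}\ts\ts$-reversible, because $D^{1/2} R \ts D^{-1/2} = \log(S)$ is symmetric, which by Definition~\ref{def:reversible} is exactly the detailed balance condition for $R$; realness of $R$ is clear from the factorisation.

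The one step deserving care is the identification $\log(B) = D^{-1/2} \log(S) \ts D^{1/2}$, that is, that conjugation by the real invertible matrix $D^{1/2}$ commutes with the principal logarithm. I would justify this through the spectral characterisation above (matching the exponential and checking that the eigenvalues stay in the principal strip), or simply by recalling that the principal logarithm is a primary matrix function and hence commutes with every similarity. As a variant that sidesteps the logarithm machinery, one may note that for diagonalisable $B$ with positive spectrum the principal logarithm equals $q(B)$, where $q$ is the real interpolating polynomial with $q(\lambda) = \log \lambda$ on the distinct (real, positive) eigenvalues of $B$. Since each power $B^k$ lies in $\cA_{\ts\bs{p}}$ and $\cA_{\ts\bs{p}}$ is a real vector space, $q(B)$ is automatically $\bs{p}\ts\ts$-reversible and real, which would give the claim directly from the structure already established for $\cA_{\ts\bs{p}}$.
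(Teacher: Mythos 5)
Your proof is correct, but it follows a genuinely different route from the paper. The paper never symmetrises: it takes the integral representation of the principal logarithm from Higham, $\log(B) = \int_{0}^{1} (B-\one)\bigl(t(B-\one)+\one\bigr)^{-1}\dd t$, transposes under the integral, and uses $B^{\trans} = D\ts B\ts D^{-1}$ together with the commutativity of the two factors to conclude $\log(B)^{\trans} = D\log(B)\ts D^{-1}$ in three lines. You instead pass to $S = D^{1/2} B\ts D^{-1/2}$, read off the symmetric principal logarithm of the positive definite $S$ from its orthogonal diagonalisation, and transport it back, identifying the result as $\log(B)$ either through the uniqueness of the logarithm with spectrum in the principal strip (which is exactly the content of the Higham theorem the paper cites for existence and realness) or through the fact that primary matrix functions commute with similarities. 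Both of your justifications for that key identification are sound. What the paper's argument buys is brevity and independence from diagonalisation (even though reversibility supplies it anyway); what yours buys is transparency about \emph{why} the result holds --- the principal logarithm of a reversible matrix is the $D^{-1/2}(\cdot)D^{1/2}$-conjugate of a symmetric matrix, which is the detailed-balance condition verbatim. Your polynomial variant is arguably the most economical of all three: since $B$ is diagonalisable with positive spectrum, $\log(B) = q(B)$ for the real polynomial interpolating $\log$ on the eigenvalues, and since $\cA_{\ts\bs{p}}$ is a real vector space containing $\one$ and all powers of $B$ (Fact~\ref{fact:symm} and the discussion around Eq.~\eqref{eq:def-Ap}), membership $\log(B)\in\cA_{\ts\bs{p}}$ is immediate; this also foreshadows the polynomial representation \eqref{eq:L-poly} that the paper exploits later in Proposition~\ref{prop:rev-embed}.
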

    
\begin{proof}
  The principal logarithm of a real matrix with positive spectrum
  exists and is again real \cite[Thm.~1.31]{Higham}. By
  \cite[Thm.~11.1]{Higham}, it is given by
\[
  \log (B) \, = \int_{0}^{1} (B-\one) \bigl( t (B-\one) + \one
  \bigr)^{-1} \dd t \ts ,
\]
where the two matrices under the integral commute.

Now, we have $B^{\trans} = D \nts B D^{-1}$ from
Definition~\ref{def:reversible}, which implies
\[
\begin{split}
  \log (B)^{\trans} \ts & = \int_{0}^{1} \Bigl( \bigl( t (B-\one) +
  \one \bigr)^{\! \trans} \Bigr)^{-1} (B-\one)^{\trans} \dd t \, =
  \int_{0}^{1} D \ts \bigl( t (B-\one)
  + \one \bigr)^{-1} (B-\one) \ts D^{-1} \dd t \\
  & = \, D \!  \int_{0}^{1} (B-\one) \bigl( t (B-\one)+\one
  \bigr)^{-1}\dd t \, D^{-1} \, = \, D \log (B) \ts D^{-1}
\end{split}
\]
as claimed, where the penultimate step uses the commutativity
mentioned above.
\end{proof}

The occurrence of multiple solutions is related to the structure of
the \emph{commutant} of the matrix $B$, which is the matrix ring
\begin{equation}\label{eq:def-comm}
   \comm (B) \, \defeq \, \{ S \in \Mat(d,\RR) : [B,S] = \nix \} \ts .
\end{equation}
When the characteristic polynomial of $B$ is also its minimal one,
$\comm(B)$ is Abelian and given by the polynomial ring $\RR[B]$.
Otherwise, it contains further elements, which complicate matters; see
\cite[Sec.~12.4]{Lan} for details. Below, we shall see that some
natural conditions will ensure that at most one real logarithm of a
given $B$ is reversible.

\section{Reversible Markov matrices}\label{sec:main}

Let us now consider the case of Markov matrices, where we begin with a
mild extension of the reversibility notion. Note that we deviate from
the standard approach in \cite[Sec.~1.6]{LP} because we do not want to
restrict to irreducible Markov matrices.

\begin{definition}\label{def:weak-rev}
  A Markov matrix\/ $M$ is \emph{reversible} if it is reversible in
  the sense of Definition~$\ref{def:reversible}$. Further, it is
  called \emph{weakly reversible} if the detailed balance equations\/
  $p^{}_{i} M^{}_{ij} = p^{}_{j} M^{}_{ji}$ hold for all\/
  $1\leqslant i,j \leqslant d$ and some probability vector\/ 
  $\bs{p} \geqslant \bs{0}$, thus admitting zero entries.
  The corresponding notions are also used for Markov generators.
\end{definition}

\subsection{General structure}
If $M\in\cM_d$ is weakly reversible, $\bs{p}$ must be an equilibrium
vector of $M$.  Weak reversibility of $M\in\cM_d$ is equivalent to the
identity $D M = M^{\trans} D$ with the matrix $D$ from
Eq.~\eqref{eq:T-def}. Note that $D$ is not invertible if $M$ is only 
weakly reversible. In relation to $\cA_{\ts\bs{p}}$ from
Eq.~\eqref{eq:def-Ap}, for any fixed probability vector $\bs{p}$, 
one  can consider the set of (weakly) $\bs{p}\ts\ts$-reversible 
Markov matrices
\[
    \cM_{d,\bs{p}} \, \defeq \, \cM_d \cap \cA_{\ts\bs{p}} \ts ,
\]
which is convex and topologically closed. It is also closed under
taking squares, under taking arbitrary positive powers, and under
the Jordan product $M \ts {\circ} \ts M' = \frac{1}{2} ( M M' \nts + M' M )$.
In fact, these three properties are equivalent, because (weak)
reversibility is defined by a linear relation, so our earlier
argument used after Fact~\ref{fact:p-alg} still applies.

When $M$ is irreducible, weak
reversibility implies reversibility, and reversibility can be
established via Kolmogorov's loop criterion; see
\cite[Sec.~1.5]{Kelly} for details. At least for small state spaces,
this is also effective, while the computational effort quickly grows
with $d$; see \cite{BCHJ} for details and alternatives.

\begin{example}\label{ex:d-two}
  Let us consider $\cM_2$. Clearly, $\one\in\cM_2$ is (weakly)
  reversible for every probability vector $\bs{p}$. Otherwise,
  $\one \ne M \in \cM_2$ reads
  $M = \left( \begin{smallmatrix} 1-a & a \\ b & 1-b
    \end{smallmatrix} \right)$ with $a,b \in [0,1]$, not both zero.
  Then, the unique equilibrium vector is
  $\bs{p} = \frac{1}{a+b} (b,a)$, and $M$ is $\bs{p}\ts\ts$-reversible
  if $ab>0$ and weakly reversible otherwise.

  By Kendall's theorem \cite{King}, $M$ is embeddable if and only if
  $\det(M) >0$, equivalently $a+b<1$, and the embedding is
  unique. Indeed, one then has $M = \ee^Q$ with the principal matrix
  logarithm $Q = - \frac{\log(1-a-b)}{a+b} (M\nts - \one)$, which is a
  Markov generator when $a+b<1$. Moreover, $Q$ is (weakly) reversible
  if and only if $M$ is (weakly) reversible.

  On the other hand, there are reversible matrices that are not
  embeddable at all, such as
  $M = \left(\begin{smallmatrix} 0 & 1 \\ 1 & 0 \end{smallmatrix}
  \right)$, and we should expect more complicated cases in $\cM_d$
  with $d>2$.  \exend
\end{example}

\begin{example}\label{ex:equal-input}
  Let $\bs{x} \in\RR^d$ be a non-negative row vector and let
  $C_{\bs{x}}\in\Mat(d,\RR)$ denote the matrix with $d$ equal rows,
  each being $\bs{x}$. Then, $Q_{\bs{x}} = -x \one + C_{\bs{x}}$ with
  $x = \tr (C_{\bs{x}}) = x^{}_{1} + \ldots + x^{}_{d}$ is a Markov
  generator and $M_{\bs{x}} = (1-x) \one + C_{\bs{x}}$ a Markov
  matrix, the latter under the condition $x \leqslant \frac{d}{d-1}$;
  see \cite{BS2} for details. Such matrices are said to be of
  \emph{equal-input} type (or to be \emph{parent independent}).  
  Since $\bs{x}=\bs{0}$ gives
  $M_{\bs{0}}=\one$, which is (weakly) reversible for every
  probability vector $\bs{p}$, let us assume that $\bs{x}$ has at
  least one positive entry.

  Now, $M_{\bs{x}}$ with non-negative $\bs{x}$ and
  $0<x\leqslant \frac{d}{d-1}$ has a unique equilibrium vector, 
  $\bs{p} = \frac{\bs{x}}{x}$, and $M_{\bs{x}}$ is weakly reversible
  for this $\bs{p}$, as we always have
  $D \ts C^{}_{\bs{x}} = C^{\trans}_{\bs{x}} \ts D$ with $D$
  as in \eqref{eq:T-def}, and reversible
  when $\bs{x}> \bs{0}$. So, all equal-input Markov matrices are
  (weakly) reversible.  The embedding problem for this matrix
  class was studied in detail in \cite{BS2}. \exend
\end{example}

\begin{example}\label{ex:bad}
  Let $M \in \cM_d$ with $d\geqslant 2$ be tridiagonal, 
  with $a_i = \PP( i \rightarrow i{+}1)>0$ for $1\leqslant i < d$
  on the superdiagonal and  $b_i = \PP ( i \rightarrow i{-}1)>0$ 
  for $1 < i \leqslant d$ on the subdiagonal. This is then 
  the irreducible transition matrix of a simple birth and death process
  (or random walk)  in discrete time with states $\{ 1, 2, \ldots , d \ts \}$.
  Here, $M$ is always reversible, because the equations for its equilibrium
  vector directly equal the detailed balance equations. 
  
  For $d=2$ and $a^{}_{1} + b^{}_{2} < 1$, we get embeddability from
  Kendall's theorem, compare \cite{King,BS1}, but no other case is embeddable
  because it violates the transitivity property that $M_{ij} > 0$ and
  $M_{jk}>0$ also forces $M_{ik}>0$; see \cite[Prop.~2.1]{BS1} for details.
  To circumvent this well-known obstruction, one can consider, as is often 
  done, birth and death processes in continuous time, then using tridiagonal 
  rate matrices to describe them.
\exend
\end{example}

Recall that $M\in\cM_d$ is reversible if it is $\bs{p}\ts\ts$-reversible
for some $\bs{p}>\bs{0}$. Such a $\bs{p}$ must be an equilibrium
vector.  The existence (and uniqueness) of such a $\bs{p}$ is clear 
when $M$ happens to be irreducible, by the Perron--Frobenius 
theorem \cite[Thm.~2.1.4]{BP}.  Since we do not restrict to 
irreducible $M$, we also need the following result, which we 
could not trace in the literature.

\begin{fact}\label{fact:pos-p}
  A matrix\/ $M\in\cM_d$ has a strictly positive equilibrium vector if
  and only if\/ $M$ is the direct sum of irreducible Markov matrices,
  hence a diagonal block matrix with irreducible blocks, possibly
  after a suitable permutation of the states. The equilibrium vector
  is unique if and only if\/ $M$ is irreducible itself.
\end{fact}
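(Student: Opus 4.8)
The plan is to prove both directions of the equivalence, treating the direct-sum structure and the uniqueness claim. The key tool is the standard connection between irreducibility of a Markov matrix and the communicating-class (equivalently, the strongly-connected-component) decomposition of its associated directed graph.

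First I would set up the graph-theoretic picture. To $M\in\cM_d$ associate the directed graph $G$ on vertices $\{1,\dots,d\}$ with an edge $i\to j$ precisely when $M_{ij}>0$. The strongly connected components of $G$ induce a partition of the states, and after relabelling (i.e.\ conjugating $M$ by a permutation matrix, which preserves membership in $\cM_d$ and merely permutes the entries of any equilibrium vector) one may assume the components are consecutive blocks. Because $M$ is a stochastic matrix, every row sum is $1$, so every vertex has at least one outgoing edge; this forces the condensation DAG of $G$ to have the property that every component with an outgoing edge to another component would leak probability mass out of itself. The heart of the forward direction is the following observation: if $\boldsymbol{p}>\boldsymbol{0}$ is an equilibrium vector, then $\boldsymbol{p}M=\boldsymbol{p}$, and summing the identity $\sum_i p_i M_{ij}=p_j$ over a union $S$ of states that is closed under no outgoing transitions gives a balance of probability flow across $S$.

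For the forward direction, suppose $\boldsymbol{p}>\boldsymbol{0}$ is an equilibrium vector. I would argue that no strongly connected component can have an edge leaving it to a strictly ``later'' component in any topological order of the condensation. Concretely, order the components so that the condensation DAG respects the order, and let $S$ be a terminal (sink) component together with everything reachable; the flow-balance argument, using $\sum_{i\in S}p_i = \sum_{i\in S}\sum_j p_i M_{ij}$ and positivity of $\boldsymbol{p}$, forces every transition starting in $S$ to stay in $S$. Peeling off sink components one at a time, and using $\boldsymbol{p}>\boldsymbol{0}$ at each stage to guarantee that the flow out of any putative transition is strictly positive, shows there can be no inter-component edges at all, so each component is closed and $M$ restricted to it is an irreducible Markov matrix. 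Hence $M$ is the claimed block-diagonal direct sum. Conversely, if $M=\bigoplus_k M^{(k)}$ with each $M^{(k)}$ irreducible, Perron--Frobenius \cite[Thm.~2.1.4]{BP} supplies a strictly positive equilibrium vector $\boldsymbol{q}^{(k)}$ for each block; stitching these together with any strictly positive convex weights yields a strictly positive equilibrium vector for $M$, establishing the ``if'' direction.

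For the uniqueness statement, I would note that each irreducible block $M^{(k)}$ has, by Perron--Frobenius, a \emph{unique} strictly positive equilibrium vector up to scale. The equilibrium vectors of the whole matrix $M$ are exactly the direct sums of the block equilibrium vectors with arbitrary non-negative weights summing to $1$, so the equilibrium vector is unique precisely when there is only one block, i.e.\ when $M$ is itself irreducible; if there are two or more blocks one can vary the convex weights to produce distinct strictly positive equilibrium vectors. The main obstacle I anticipate is making the flow-balance peeling argument fully rigorous while only using $\boldsymbol{p}>\boldsymbol{0}$ (rather than irreducibility, which is what we are trying to conclude): one must be careful that strict positivity of $\boldsymbol{p}$ is exactly what converts ``no net flow out of a sink-reachable set'' into ``no individual outgoing transition,'' and that the induction on peeling off components is well-founded. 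The permutation-relabelling bookkeeping is routine but should be stated cleanly so that the ``suitable permutation of the states'' in the statement is accounted for.
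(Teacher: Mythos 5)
Your proof is correct, but the hard direction goes by a genuinely different route than the paper's. For the implication ``strictly positive equilibrium vector $\Rightarrow$ block-diagonal with irreducible blocks'', the paper does not argue combinatorially at all: it observes that any $M\in\cM_d$ has the strictly positive \emph{right} eigenvector $\bs{1}^{\nts\trans}$ for $\lambda=1$, and then invokes the structure theorem \cite[Thm.~2.3.14]{BP} for non-negative matrices, which states that $\lambda=1$ admits strictly positive left \emph{and} right eigenvectors if and only if, after a permutation, $M$ is a direct sum of irreducible blocks (equivalently, all communicating classes are basic and final in the Frobenius normal form). Your flow-balance peeling argument replaces that citation by an elementary, self-contained proof, using only Perron--Frobenius for irreducible stochastic matrices plus the stationarity balance over unions of communicating classes; it makes transparent that strict positivity of $\bs{p}$ is exactly what kills all inter-class transitions, at the cost of the graph-theoretic bookkeeping. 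The paper's route is shorter and places the fact inside the general theory of non-negative matrices; yours is more elementary and exposes the mechanism. The converse direction and the uniqueness claim (Perron--Frobenius per block, convex combinations of block equilibria, non-uniqueness as soon as there are at least two blocks) are handled the same way in both arguments.

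One small repair to your peeling step: for a \emph{sink} component $S$ of the condensation, transitions starting in $S$ stay in $S$ by the very definition of a sink; what the balance $\sum_{j\in S}p_j \ts = \ts \sum_{i\in S}p_i + \sum_{i\notin S}p_i\sum_{j\in S}M_{ij}$ together with $\bs{p}>\bs{0}$ actually yields is that no transition \emph{enters} $S$ from outside. (Symmetrically, for a \emph{source} component the balance shows that no transition leaves it.) Either orientation makes the complement of $S$ closed and supports your induction, but the conclusion you attribute to the flow balance should be stated accordingly.
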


\begin{proof}
  Any $M\in\cM_d$ has spectral radius $1$ and satisfies
  $M \ts \bs{1}^{\nts \trans} = \bs{1}^{\nts \trans}$, which is
  strictly positive. If $M$ is irreducible, also the corresponding
  left eigenvector $\bs{p}$ is strictly positive, by the
  Perron--Frobenius theorem. When normalised such that
  $\bs{p} \cdot \bs{1}^{\nts \trans} = 1$, this is the equilibrium
  vector. If $M = \oplus_{i=1}^{s} M^{(i)}$ with $s>1$ and irreducible
  $M^{(i)}$ of dimension $d^{(i)}$, every block has its unique
  equilibrium vector $\bs{p}^{(i)}$ of length $d^{(i)}$. Then, each
  $\bs{0} \oplus \dots \oplus \bs{0} \oplus \bs{p}^{(i)} \oplus \bs{0}
  \oplus \dots \oplus \bs{0}$ is an equilibrium vector of $M$, and any
  convex combination of these $s$ probability vectors is again an
  equilibrium vector, and among them are many strictly positive ones.

  It remains to see why this is the only possibility of $M\in\cM_d$
  with $\bs{p} M = \bs{p}>\bs{0}$. Thus, we need a criterion for the
  existence of a strictly positive left eigenvector. Since a strictly
  positive right eigenvector exists, we can invoke
  \cite[Thm.~2.3.14]{BP}, which implies that $\lambda = 1$ has
  strictly positive left \emph{and} right eigenvectors if and only if, after
  some permutation, $M = \bigoplus_{i=1}^{s} M^{(i)}$ with all
  $M^{(i)}$ irreducible. This is equivalent to all communication
  classes being both basic and final, which brings the Frobenius
  normal form to this diagonal block form.
\end{proof}

A reversible Markov matrix, up to permutations of the states, must
then be such a block-diagonal matrix with irreducible blocks, and this
is the reason why most treatments of reversibility restrict to one
block, and hence to irreducible matrices. In the context of the
embedding problem and the relation to weakly reversible extensions,
this is neither necessary nor advantageous.

\begin{remark}\label{rem:freedom}
  When $M\in\cM_d$ is irreducible and reversible, it must be
  reversible for the unique equilibrium vector $\bs{p}$ of $M$, which
  is strictly positive. When $M$ is reducible and reversible for some
  $\bs{p}>\bs{0}$, this is not the only equilibrium vector. Since
  $M = \bigoplus_{i=1}^{s} M^{(i)}$ as in the proof of
  Fact~\ref{fact:pos-p}, each irreducible block $M^{(i)}$ must be
  reversible on its own. Consequently, $M$ must be (weakly) reversible
  for \emph{each} of its equilibrium vectors, which gives some freedom
  to choose a suitable one when comparing with nearby matrices.
  
  Note that the converse need not hold: A weakly reversible matrix $M$ 
  can be a direct sum of a reversible and a non-reversible one, where 
  both can still be irreducible. Then, there is also a strictly positive
  equilibrium vector, but $M$ can neither be reversible with respect to
  this one, nor with respect to any other strictly positive one.
  \exend
\end{remark}

At this point, we have gained access to the following classes of matrices.

\begin{example}\label{ex:symm}
  Consider the convex set of \emph{doubly stochastic} matrices within
  $\cM_{d}$, which are the Markov matrices with both unit row sums 
  and unit column sums. By the Birkhoff--von Neumann theorem, 
  they form a convex polytope with the $d!$ permutation matrices as its 
  vertices (or extremal points), so each doubly stochastic matrix is a
  convex combination of (some of) these.
  
  If $M$ is any doubly stochastic matrix, $\bs{1}$ is a left eigenvector 
  for the leading eigenvalue $\lambda = 1$, so $\bs{p} = \frac{1}{d} \bs{1}$
  is an equilibrium vector, which gives $D = \one$. All \emph{symmetric} Markov 
  matrices are $\bs{p}\ts\ts$-reversible, with $\widetilde{M} = D^{-1} M^{\trans} D
  = M^{\trans} = M$. Moreover, by Remark~\ref{rem:freedom}, they are 
  the only reversible ones among the doubly stochastic
  matrices, though there can be further weakly reversible
  matrices that are not symmetric.   \exend
\end{example}  
  
\subsection{Embedding}
Let us now take a closer look at embeddability, where we start with
a precise version of the notion alluded to in the introduction.

\begin{definition}\label{def:embeddable}
  A (weakly) reversible Markov matrix\/ $M$ is called \emph{embeddable}
  if it satisfies\/ $M = \ee^Q$ for some Markov generator\/ $Q$.
  Further, $M$ is \emph{reversibly embeddable} if\/ $M = \ee^Q$
  holds for a Markov generator\/ $Q$ that is itself (weakly) reversible.
\end{definition}
  
For embeddability, we thus need a real logarithm that is also a
rate matrix. To obtain a practically useful criterion, let us begin with 
the following observation.

\begin{fact}\label{fact:principal}
  Let\/ $M \in \cM_d$ be a Markov matrix with positive spectrum,
  $\sigma(M)\subset\RR_{+}$, and let\/ $m\leqslant d$ be the degree of
  its minimal polynomial. Then, $M$ has at least one real logarithm,
  namely its principal matrix logarithm,
\[
    \log(\one+A)  \, = 
    \sum_{n=1}^{\infty} \myfrac{(-1)^{n-1}}{n} \ts A^n ,
\]
where\/ $A = M \nts - \one$ is a rate matrix with spectral radius\/
$\varrho^{}_{A} < 1$. Here, $\log(\one+A)$ is a matrix with zero row
sums that is a polynomial in $A$ of degree\/ $m-1$ with real
coefficients.

Further, when\/ $\sigma(M) \subset \RR_{+}$ is simple, $\log(\one+A)$
is the only real matrix logarithm of\/ $M$.
\end{fact}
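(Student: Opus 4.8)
The plan is to prove Fact~\ref{fact:principal} in three stages: first establish that $A = M - \one$ is a rate matrix with spectral radius strictly less than $1$, then verify the convergence and row-sum properties of the series, and finally deduce uniqueness in the simple-spectrum case from Culver's result.

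First I would observe that $A = M - \one$ has non-negative off-diagonal entries (since $M$ does) and zero row sums (since $M$ has unit row sums), so $A$ is indeed a rate matrix. For the spectral radius, I would use the hypothesis $\sigma(M) \subset \RR_{+}$ together with the fact that $M$ is a Markov matrix, so every eigenvalue $\mu$ of $M$ satisfies $0 < \mu \leqslant 1$ (the spectral radius of any Markov matrix is $1$). The eigenvalues of $A$ are then $\mu - 1 \in (-1, 0]$, so $\varrho^{}_{A} = \max_{\mu} \lvert \mu - 1\rvert < 1$. This is the point where positivity of the spectrum is essential: it rules out the eigenvalue $\mu = -1$, which would push $\lvert \mu - 1\rvert$ up to $2$ and destroy convergence.

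Next I would address the series itself. Since $\varrho^{}_{A} < 1$, the principal logarithm $\log(\one + A) = \sum_{n=1}^{\infty} \frac{(-1)^{n-1}}{n} A^n$ converges, and by Fact~\ref{fact:Culver} (as $M$ is non-singular with positive, hence non-negative-free, spectrum) this is a genuine real logarithm of $M$. To see that $\log(\one+A)$ has zero row sums, I would note that each $A^n$ for $n \geqslant 1$ has zero row sums, i.e.\ $A^n \bs{1}^{\nts\trans} = \bs{0}^{\nts\trans}$, because $A\ts\bs{1}^{\nts\trans} = \bs{0}^{\nts\trans}$; summing the series and using continuity gives $\log(\one+A)\ts\bs{1}^{\nts\trans} = \bs{0}^{\nts\trans}$. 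For the polynomial claim, I would invoke the Cayley--Hamilton theorem in the sharper form given by the minimal polynomial: since the minimal polynomial of $M$ (equivalently of $A$, up to the shift) has degree $m$, every power $A^n$ with $n \geqslant m$ is a real-coefficient linear combination of $\one, A, \ldots, A^{m-1}$, and collecting the series accordingly expresses $\log(\one + A)$ as a real polynomial in $A$ of degree at most $m-1$. The coefficient of $\one$ vanishes precisely because the row sums are zero, leaving genuine degree $m-1$ generically.

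Finally, for the uniqueness statement under simple spectrum, I would appeal directly to Fact~\ref{fact:Culver}: when $\sigma(M) \subset \RR_{+}$ is simple, no elementary Jordan block is repeated and all eigenvalues are positive reals, so $M$ has a \emph{unique} real logarithm, which must therefore coincide with the principal one constructed above. The main obstacle, though a mild one, is handling the transition from the infinite series to the finite polynomial representation cleanly: one must ensure that the reduction via the minimal polynomial respects real coefficients and correctly tracks the vanishing of the constant term, rather than merely asserting polynomiality abstractly. The convergence and row-sum steps are routine once the spectral bound $\varrho^{}_{A} < 1$ is in hand.
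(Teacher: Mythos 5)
Your proposal is correct and takes essentially the same route as the paper's proof: the spectral bound $\sigma(A)\subset(-1,0\ts]$ from positivity of $\sigma(M)$ together with the Markov property, convergence of the series with zero row sums of all powers $A^n$ preserved in the limit by continuity, the polynomial form of degree $m-1$ via the Cayley--Hamilton theorem and the minimal polynomial (with the constant term forced to vanish by the zero row sums), and uniqueness in the simple-spectrum case via Culver's theorem. One minor caveat: Culver's theorem guarantees that \emph{some} real logarithm exists but does not identify the series as one; that the convergent series actually satisfies $\exp\bigl(\log(\one+A)\bigr)=\one+A$ is a standard functional-calculus identity, which the paper likewise takes for granted.
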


\begin{proof}
  Under the assumptions, we have $\sigma(M) \subset (0,1]$, hence
  $\sigma(A) \subset (-1,0]$, and $\varrho^{}_{A}<1$ is clear. This
  implies the convergence of the series. Since $M$ has unit row sums,
  $A$ has zero row sums, as do all positive powers of $A$. As this
  property is preserved in the limit by continuity, $\log(\one+A)$ has 
  zero row sums as well.

  Since the minimal polynomials of $M$ and $A$ have the same degree,
  the claim on the polynomial form is a consequence of the
  Cayley--Hamilton theorem; compare \cite[Thm.~3.3.1]{HJ}.

  The final uniqueness statement is a consequence of Culver's theorem
  (Fact~\ref{fact:Culver}).
\end{proof}

It is clear that a reversible Markov matrix has real spectrum, because
it is similar to a symmetric matrix. Such a matrix can still have
negative eigenvalues. If all negative eigenvalues have even
multiplicity, a real matrix logarithm exists and embeddability may
occur, but not with a reversible generator, as the following result
shows.

\begin{lemma}\label{lem:url}
  Let\/ $M\in\cM_d$ and let\/ $\bs{p}>\bs{0}$ be a probability
  vector. Assume that a\/ $\bs{p}\ts$-reversible real matrix
  logarithm\/ $R$ of\/ $M$ exists, so\/ $M = \ee^R$. Then, $M$ is\/
  $\bs{p}\ts\ts$-reversible with $\sigma (M) \subset
  \RR_{+}$. Further, no other\/ $\bs{p}\ts\ts$-reversible real
  logarithm of\/ $M$ exists, and\/ $\ts R = \log(\one+A)$ is the
  principal matrix logarithm from Fact~$\ts\ref{fact:principal}$.
\end{lemma}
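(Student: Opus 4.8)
The first assertion I would dispatch immediately: since $R$ is $\bs{p}\ts\ts$-reversible and $M=\ee^R$, Fact~\ref{fact:rev-exp} yields at once that $M$ is $\bs{p}\ts\ts$-reversible with $\sigma(M)\subset\RR_{+}$. This already pins down the spectral situation and lets me invoke the results that require positive spectrum.

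For the uniqueness and the identification with the principal logarithm, the plan is to pass to a symmetric picture via the similarity $X\mapsto D^{1/2} X D^{-1/2}$, which, as noted right after Definition~\ref{def:reversible}, carries every $\bs{p}\ts\ts$-reversible matrix to a \emph{symmetric} one. Setting $S\defeq D^{1/2} M D^{-1/2}$, this is symmetric with $\sigma(S)=\sigma(M)\subset\RR_{+}$, hence symmetric positive-definite. If $R'$ denotes \emph{any} $\bs{p}\ts\ts$-reversible real logarithm of $M$, then $T'\defeq D^{1/2} R' D^{-1/2}$ is symmetric and satisfies $\ee^{T'}=D^{1/2}\ee^{R'}D^{-1/2}=S$, using that conjugation commutes with the matrix exponential.

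The key step---and the one I expect to carry the weight of the argument---is the uniqueness of the symmetric logarithm: a symmetric positive-definite matrix has exactly one symmetric real logarithm. I would establish this directly. Any symmetric $T'$ with $\ee^{T'}=S$ commutes with $S$ and therefore preserves each eigenspace of $S$; on the eigenspace for an eigenvalue $\mu>0$, the restriction of $T'$ is a symmetric operator whose exponential is $\mu\ts\one$, which forces it to equal $(\log\mu)\ts\one$. Hence $T'$ is uniquely determined (it is just $\log$ applied to $S$ in an orthonormal eigenbasis), independently of the choice of $R'$. Translating back, every $\bs{p}\ts\ts$-reversible real logarithm of $M$ yields the \emph{same} $T'$, so all such logarithms coincide, which gives the asserted uniqueness.

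It then remains only to name this common value. By Fact~\ref{fact:principal}, the positive spectrum of $M$ guarantees that the principal logarithm $\log(\one+A)$ with $A=M-\one$ exists and is a real matrix with zero row sums, and Lemma~\ref{lem:principal} shows that this principal logarithm is itself $\bs{p}\ts\ts$-reversible. Being one of the now-unique $\bs{p}\ts\ts$-reversible logarithms of $M$, it must equal $R$, so $R=\log(\one+A)$ as claimed. The only genuine obstacle is the symmetric-logarithm uniqueness in the third paragraph; everything else is a direct assembly of Fact~\ref{fact:rev-exp}, Fact~\ref{fact:principal}, and Lemma~\ref{lem:principal}.
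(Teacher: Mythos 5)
Your proposal is correct, and the first and last parts (Fact~\ref{fact:rev-exp} for reversibility and positivity of $\sigma(M)$; Fact~\ref{fact:principal} plus Lemma~\ref{lem:principal} to exhibit the principal logarithm as a reversible logarithm) coincide with the paper's. Where you genuinely diverge is the uniqueness step. The paper compares an arbitrary real logarithm $Q$ of $M$ with the principal one $L$: it first argues $\comm(M)=\comm(L)$, in the sense of \eqref{eq:def-comm}, from the matched diagonalisations of $M$ and $L$, so that $[Q,L]=\nix$, then writes $\one = \ee^{L}\ee^{-Q} = \ee^{L-Q}$, and finally applies the spectral mapping theorem to the diagonalisable matrix $L-Q$ (diagonalisable because $Q$ is assumed reversible) to force $L-Q=\nix$. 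You instead conjugate by $D^{1/2}$ into the symmetric picture and prove directly that a symmetric positive-definite matrix has a unique symmetric real logarithm, by noting that any symmetric logarithm $T'$ commutes with $S=\ee^{T'}$, preserves each eigenspace of $S$, and must restrict to $(\log\mu)\ts\one$ on the eigenspace of $\mu$ --- so every reversible logarithm of $M$ is pinned to one canonical object (the spectral logarithm of $S$) rather than being compared pairwise with $L$. Your route is more elementary and self-contained: it needs neither the commutant identification nor the SMT applied to a difference of logarithms, and it isolates a clean standalone fact (uniqueness of the symmetric logarithm of a positive-definite matrix). What the paper's route buys in exchange is the intermediate information that \emph{every} real logarithm $Q$ of $M$, reversible or not, commutes with $L$ and differs from it by a real logarithm of $\one$; this framing of the non-reversible solutions is what the paper builds on in its subsequent discussion of balanced pairs and multiple embeddings, whereas your argument says nothing about logarithms outside the reversible class.
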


\begin{proof}
  When $M = \ee^R$ with a $\bs{p}\ts\ts$-reversible $R$,
  Fact~\ref{fact:rev-exp} implies the first claim.
  
  It is clear that the rate matrix $A = M - \one$ is
  $\bs{p}\ts\ts$-reversible, as are the powers $A^n$ for $n\in\NN$.
  Consequently, $L=\log(\one+A)$ from Fact~\ref{fact:principal} is a
  real logarithm of $M$ with zero row sums that is also
  $\bs{p}\ts\ts$-reversible, in line with Lemma~\ref{lem:principal}.
  
  In the generic case that $M$ has simple spectrum, there is precisely
  one real logarithm of $M$ by Culver's theorem
  (Fact~\ref{fact:Culver}), which must be the existing principal one,
  so we get $R=L=\log(\one+A)$.

  In general, we know that $M$ and $L$ commute, and both are
  $\bs{p}\ts\ts$-reversible matrices that are simultaneously
  diagonalisable, with positive and with real spectrum,
  respectively. Moreover, their diagonal forms match, in the sense
  that $\lambda\in\sigma(M)$ and $\log(\lambda)\in\sigma(L)$ 
  share the same (algebraic and geometric) multiplicity. 
  This implies that they have the same
  commutant, $\comm(M) = \comm(L)$, as defined in 
  \eqref{eq:def-comm}; see \cite{Lan} for background and details.

  Now, let $Q$ be any real solution of $M=\ee^Q$. Then, $Q$ must
  commute with $M$ and hence also with $L$. As $\ee^Q$ is invertible,
  we get
\[  
    \one \, = \, \ee^L \ee^{-Q} \, = \, \ee^{L-Q},
\]
and $L-Q$ is a real logarithm of $\one$. If $Q$ is also
$\bs{p}\ts\ts$-reversible, then so is $L-Q$, which thus is
diagonalisable with real spectrum. By the SMT, as $0$ is the only real
logarithm of $1$, the spectrum of the diagonalisable matrix $L-Q$ must
consist of $0\ts\ts$s entirely, which implies $L-Q=\nix$. We thus get
$Q=L$, which also applies to the matrix $R$ from the assumption.
\end{proof}

\begin{remark}\label{rem:sets-of-p}
  Let us note that $M = \ee^Q$ with $Q$ a Markov generator
  implies that $M$ and $Q$ share the same set of equilibrium vectors.
  Clearly, $\bs{p} \ts Q = \bs{0}$ implies $\bs{p} M = \bs{p}$. The converse
  direction depends on the fact that any eigenvalue of a Markov generator
  is either $0$ or has a strictly negative real part, see
  \cite[Prop.~2.3]{BS1}, thus can never be purely imaginary, hence
  not of the form $k \ts 2 \pi \ii$ for $k\ne 0$. 
  Since the algebraic and the geometric multiplicity of $1\in \sigma (M)$
  is the same \cite[Fact~2.2]{BS3}, the spectrum of $Q$ must contain
  $0$ with the identical (algebraic and geometric) multiplicity by the SMT. 
  Consequently, any (real) eigenvector of $M$ with $\bs{x} M = \bs{x}$ 
  must also satisfy $\bs{x} Q = \bs{0}$, which includes all equilibrium vectors.
  In particular, in this situation, $M$ has a strictly positive one if and only
  if $Q$ does.
  
  Let us note that the above argument critically depends on $Q$ being
  a rate matrix. Indeed, it can already fail if $Q$ has only zero row sums,
  but violates non-negativity of its off-diagonal elements (such as 
  $Q = \frac{2 \pi}{\sqrt{3}}(Q_{+} \nts - Q_{-})$ with $Q^{}_{\pm}$ 
  from Example~\ref{ex:strange} below, which has spectrum $\sigma (Q) = 
  \{ 0, \pm \ii \}$). Interestingly,
  Fact~\ref{fact:pos-p} now also has the consequence that a 
  reversible and embeddable Markov matrix, possibly after a suitable 
  permutations of the states, must be the direct sum of totally positive 
  (hence in particular primitive) block matrices, because the exponential of an
  irreducible Markov generator is a totally positive matrix.    \exend
\end{remark}

Let us continue with a straightforward consequence of
Lemma~\ref{lem:url}.

\begin{prop}\label{prop:rev-simple}
  If\/ $M\in\cM_d$ is a reversible Markov matrix with simple, positive
  spectrum, the following statements are equivalent, where\/
  $A = M \nts - \one$ as before.
\begin{enumerate}\itemsep=2pt
  \item $M$ is embeddable.
  \item $M$ is reversibly embeddable.
  \item $Q=\log(\one+A)$ is a Markov generator.
\end{enumerate}  
\end{prop}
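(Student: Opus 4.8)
The plan is to prove the cyclic chain $(1)\Rightarrow(3)\Rightarrow(2)\Rightarrow(1)$, with the entire argument resting on the uniqueness of the real logarithm under simple, positive spectrum.

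First, I would fix the structural input. Since $M$ is reversible, it is $\bs{p}\ts\ts$-reversible for some $\bs{p}>\bs{0}$, and by assumption $\sigma(M)\subset\RR_{+}$ is simple. By the uniqueness statement of Fact~\ref{fact:Culver} (equivalently, the final assertion of Fact~\ref{fact:principal}), the matrix $M$ then has exactly one real logarithm, namely the principal one $Q^{}_{0}=\log(\one+A)$, which by Fact~\ref{fact:principal} is a real matrix with zero row sums.

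For $(1)\Rightarrow(3)$, embeddability yields $M=\ee^{Q}$ for some Markov generator $Q$; in particular, $Q$ is a real logarithm of $M$, so uniqueness forces $Q=Q^{}_{0}=\log(\one+A)$. Hence $\log(\one+A)$ is a Markov generator, which is $(3)$. For $(3)\Rightarrow(2)$, I would invoke Lemma~\ref{lem:principal}: because $M$ is $\bs{p}\ts\ts$-reversible with positive spectrum, its principal logarithm $Q^{}_{0}$ is again $\bs{p}\ts\ts$-reversible. Thus, assuming $Q^{}_{0}$ is a Markov generator, it is a reversible Markov generator with $M=\ee^{Q^{}_{0}}$, so that $M$ is reversibly embeddable. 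Finally, $(2)\Rightarrow(1)$ is immediate, since a reversible Markov generator is in particular a Markov generator.

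I expect no genuine obstacle here: the whole proposition is powered by two ingredients already at hand, namely the uniqueness of the real logarithm for simple, positive spectrum (Fact~\ref{fact:Culver}) and the reversibility of the principal logarithm (Lemma~\ref{lem:principal}). The one point deserving care is the step inside $(1)\Rightarrow(3)$, where an a priori arbitrary embedding generator $Q$ must be identified with $\log(\one+A)$; this is exactly where simplicity of the spectrum is essential, as it excludes the competing real logarithms that a repeated eigenvalue would otherwise admit. For the non-simple case, the corresponding reversible refinement is supplied instead by Lemma~\ref{lem:url}.
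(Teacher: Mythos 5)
Your proposal is correct and follows essentially the same route as the paper: both arguments hinge on the uniqueness of the real logarithm under simple, positive spectrum (Fact~\ref{fact:Culver}/Fact~\ref{fact:principal}) together with the $\bs{p}\ts\ts$-reversibility of the principal logarithm (Lemma~\ref{lem:principal}), so that any embedding generator must coincide with $\log(\one+A)$ and is then automatically reversible. The only difference is presentational: you arrange the argument as the cycle $(1)\Rightarrow(3)\Rightarrow(2)\Rightarrow(1)$, whereas the paper states the same two ingredients and concludes all equivalences at once.
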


\begin{proof}
  Under the assumption, there is precisely one real matrix logarithm,
  namely the principal one from Fact~\ref{fact:principal}, which is
  $Q$ as stated. This $Q$ is reversible with the same $\bs{p}$ as $M$.

  Consequently, $M$ is embeddable if and only if this $Q$ is a Markov
  generator.  If so, it is automatically a reversible generator, and
  we are done.
\end{proof}

When the spectrum fails to be simple, further real logarithms exist, and
we shall see explicit cases below in Examples~\ref{ex:strange} and 
\ref{ex:super-strange}. However, another consequence of Lemma~\ref{lem:url} 
in conjunction with Remark~\ref{rem:freedom} is the following.

\begin{coro}\label{coro:rev-embed}
  A reversible Markov matrix\/ $M = \one + A \in \cM_d$ with positive
  spectrum is reversibly embeddable if and only if\/
  $Q = \log(\one+A)$ is a Markov generator.
  \qed
\end{coro}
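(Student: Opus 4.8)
The plan is to prove the two implications separately, exploiting the corollary's stated derivation from Lemma~\ref{lem:url} and Remark~\ref{rem:freedom}. Throughout, I write $A = M - \one$ and fix a probability vector $\bs{p} > \bs{0}$ for which $M$ is reversible, which exists by hypothesis.

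For the direction that $Q = \log(\one + A)$ being a Markov generator implies reversible embeddability, I would argue directly from the preparatory results. Fact~\ref{fact:principal} already guarantees that $Q = \log(\one+A)$ is a real logarithm of $M$ with zero row sums, and Lemma~\ref{lem:principal} shows that this principal logarithm is $\bs{p}\ts$-reversible, since $M$ is $\bs{p}\ts$-reversible with positive spectrum. Hence, once we assume $Q$ is a Markov generator, it is automatically a reversible one satisfying $M = \ee^Q$, so that $M$ is reversibly embeddable in the sense of Definition~\ref{def:embeddable}. This direction needs no input beyond the facts already in place.

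For the converse, I would start from the assumption that $M = \ee^Q$ for a reversible Markov generator $Q$, say $\bs{p}'\ts$-reversible for some probability vector $\bs{p}' > \bs{0}$. The decisive point is that $Q$ is then a $\bs{p}'\ts$-reversible real logarithm of $M$ with $\bs{p}' > \bs{0}$, which is precisely the hypothesis of Lemma~\ref{lem:url}. Applying that lemma with $\bs{p}'$ in the role of its $\bs{p}$ forces $M$ to have positive spectrum and, by its uniqueness clause, $Q = \log(\one + A)$. As $Q$ is a Markov generator by assumption, so is $\log(\one+A)$, which is the desired conclusion.

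The main obstacle, and the reason Remark~\ref{rem:freedom} is invoked, is that the vector $\bs{p}'$ witnessing the reversibility of $Q$ need not equal the vector $\bs{p}$ originally chosen for $M$; for reducible $M$ the equilibrium vector is far from unique, so one must take care not to conflate the two. I would resolve this by observing, via Remark~\ref{rem:sets-of-p}, that $\bs{p}'$ satisfies $\bs{p}' Q = \bs{0}$ and is therefore an equilibrium vector of $Q$, hence of $M$; by Remark~\ref{rem:freedom}, each irreducible block of $M$ is reversible, so $M$ is in fact reversible for \emph{every} equilibrium vector, and in particular for $\bs{p}'$. This freedom in the choice of reference vector is exactly what licenses the application of Lemma~\ref{lem:url} with $\bs{p}'$ rather than $\bs{p}$. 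I expect the only genuine care required is to keep the two probability vectors apart and to record the strict positivity of $\bs{p}'$, after which the uniqueness statement of Lemma~\ref{lem:url} closes the argument mechanically.
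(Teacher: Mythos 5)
Your proof is correct and takes essentially the same route as the paper, which derives this corollary directly from Lemma~\ref{lem:url} in conjunction with Remark~\ref{rem:freedom}, exactly as you do, with Fact~\ref{fact:principal} and Lemma~\ref{lem:principal} supplying the easy direction. Your extra bookkeeping via Remark~\ref{rem:sets-of-p} is harmless but not strictly needed: Lemma~\ref{lem:url} applied with $\bs{p}'$ already yields both that $M$ is $\bs{p}'$-reversible and that the reversible logarithm must be the principal one $\log(\one+A)$.
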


When applied to the class of symmetric matrices, compare
Example~\ref{ex:symm}, this gives the
following generalisation of \cite[Thm.~6.5]{BS1}.

\begin{coro}\label{coro:symm}
  A symmetric\/ $M = \one + A\in \cM_d$ is symmetrically embeddable 
  if and only if\/ $\sigma (M) \subset \RR_{+}$ and the symmetric
  matrix\/ $Q = \log(\one+A)$ is a Markov generator.  \qed
\end{coro}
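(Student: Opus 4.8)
The plan is to derive Corollary~\ref{coro:symm} as a specialisation of Corollary~\ref{coro:rev-embed} to the symmetric case, so the first task is to connect symmetric embeddability with reversible embeddability for the canonical vector $\bs{p} = \frac{1}{d}\bs{1}$. Recall from Example~\ref{ex:symm} that every symmetric Markov matrix $M$ has $\bs{p} = \frac{1}{d}\bs{1}$ as an equilibrium vector, giving $D = \frac{1}{d}\one$ and hence $\widetilde{M} = M^{\trans} = M$, so that $\bs{p}\ts$-reversibility coincides with symmetry in this case. Since $D$ is a scalar multiple of the identity, conjugation by $D$ acts trivially, so $Q = D^{-1} Q^{\trans} D$ reduces to $Q = Q^{\trans}$; that is, a generator is $\bs{p}\ts$-reversible for this particular $\bs{p}$ if and only if it is symmetric. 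This identification is what lets me read off the symmetric statement from the reversible one.

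First I would establish the forward direction. If $M$ is symmetrically embeddable, then $M = \ee^{Q}$ for a symmetric Markov generator $Q$. Symmetry of $Q$ means $Q$ is $\bs{p}\ts$-reversible for $\bs{p} = \frac{1}{d}\bs{1}$, so Fact~\ref{fact:rev-exp} immediately forces $\sigma(M) \subset \RR_{+}$; moreover $M$ is then reversibly embeddable in the sense of Corollary~\ref{coro:rev-embed}, which yields that $Q = \log(\one + A)$ is the principal logarithm and a Markov generator. Since the principal logarithm of a symmetric matrix is again symmetric (it is a real polynomial in $A = M - \one$, which is symmetric), the generator $\log(\one+A)$ is symmetric, closing this direction.

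For the converse, I would assume $\sigma(M) \subset \RR_{+}$ and that $Q = \log(\one+A)$ is a Markov generator. Because $A = M - \one$ is symmetric, its principal logarithm $Q$ is a polynomial in $A$ with real coefficients, hence symmetric, and therefore $\bs{p}\ts$-reversible for $\bs{p} = \frac{1}{d}\bs{1}$. By Corollary~\ref{coro:rev-embed}, $M$ is reversibly embeddable via this $Q$, and since $Q$ is symmetric, the embedding is in fact symmetric, giving symmetric embeddability as required.

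The only genuine content beyond invoking Corollary~\ref{coro:rev-embed} is the repeated use of the fact that the principal logarithm of a symmetric matrix with positive spectrum is symmetric, together with the observation that for the flat equilibrium vector $\bs{p} = \frac{1}{d}\bs{1}$ the reversibility involution degenerates to plain transposition. Neither is hard: the former follows from the polynomial representation in Fact~\ref{fact:principal} (or alternatively from the integral formula in Lemma~\ref{lem:principal} with $D$ a scalar), and the latter is immediate from $D$ being scalar. I expect no real obstacle here; the main point of care is simply to make the identification \emph{symmetric} $=$ $\bs{p}\ts$-\emph{reversible for} $\bs{p}=\frac{1}{d}\bs{1}$ explicit so that Corollary~\ref{coro:rev-embed} applies verbatim, and to note that the positivity requirement $\sigma(M)\subset\RR_{+}$, which was part of the hypothesis in Corollary~\ref{coro:rev-embed}, here becomes a derived necessary condition in the forward direction rather than an assumption.
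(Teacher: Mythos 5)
Your proposal is correct and follows exactly the route the paper intends: Corollary~\ref{coro:symm} is stated as a direct specialisation of Corollary~\ref{coro:rev-embed} to the symmetric case via Example~\ref{ex:symm}, which is precisely your identification of symmetry with $\bs{p}\ts$-reversibility for $\bs{p}=\frac{1}{d}\bs{1}$ (scalar $D$), plus the observation that the principal logarithm, being a real polynomial in $A$, inherits symmetry. The details you supply (using Fact~\ref{fact:rev-exp} for necessity of positive spectrum and the polynomial form from Fact~\ref{fact:principal} for symmetry of $\log(\one+A)$) are exactly the ones the paper leaves implicit.
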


\subsection{Concrete conditions}
Let us next derive a simple criterion for reversible embeddability, that
is, for $\log(\one+A)$ to be a Markov generator. Let $M\in\cM_d$ be
reversible with positive spectrum, possibly with multiplicities, and
assume that the degree of its minimal polynomial is $m$, where
$m\leqslant d$. Then, by Fact~\ref{fact:principal}, there are real numbers
$\alpha^{}_{1}, \ldots , \alpha^{}_{m-1}$ such that
\begin{equation}\label{eq:L-poly}
   L \, = \, \log(\one+A) \, = \, \sum_{k=1}^{m-1} \alpha^{}_{k} A^{k},
\end{equation}
where no term with $A^0 = \one$ occurs because $L$ has zero row sums.
Since $\sigma (M) \subset (0,1]$, we can write the eigenvalues of $M$
in descending order as
$\lambda^{}_{0} = 1 > \lambda^{}_{1} > \cdots > \lambda^{}_{m-1} > 0$
and those of $A$ as
$\mu^{}_{0} = 0 > \mu^{}_{1} > \cdots > \mu^{}_{m-1}$, where
$\mu^{}_{i} = \lambda^{}_{i}-1$. The corresponding eigenvalues of $L$
are the unique real logarithms
$0 = \log(\lambda^{}_{0}) > \log (\lambda^{}_{1}) > \dots > \log
(\lambda^{}_{m-1})$.  The SMT now gives $m-1$ conditions for the
eigenvalues. They can be written in matrix form as
\begin{equation}\label{eq:VDM}
  \begin{pmatrix}
    \mu^{}_{1} & \mu^{2}_{1} & \cdots & \mu^{m-1}_{1} \\
    \mu^{}_{2} & \mu^{2}_{2} & \cdots & \mu^{m-1}_{2} \\
    \vdots &  & & \vdots \\
    \mu^{}_{m-1} & \mu^{2}_{m-1} & \cdots & \mu^{m-1}_{m-1} 
  \end{pmatrix} \begin{pmatrix} \alpha^{}_{1} \\
      \alpha^{}_{2} \\ \vdots \\ \alpha^{}_{m-1}
  \end{pmatrix} \, = \, \begin{pmatrix} \log(\lambda^{}_{1}) \\
    \log(\lambda^{}_{2}) \\ \vdots \\ \log(\lambda^{}_{m-1}) 
  \end{pmatrix} .
\end{equation}
We note that the number of equations is one lower than in the 
approach of \cite{Chen}.  The matrix on the left has determinant
$\bigl( \prod_i \mu^{}_{i} \bigr) \prod_{k>\ell} (\mu^{}_{k} -
\mu^{}_{\ell})$ and is non-singular because the $\mu^{}_{i}$ with
$1\leqslant i \leqslant m-1$ are non-zero and distinct. So, the
coefficients $\alpha^{}_{i}$ are uniquely determined and real; see
\cite[Sec.~5]{BS2} for details and an explicit formula for the inverse
of this variant of the Vandermonde matrix. We thus have established
the following result.

\begin{prop}\label{prop:rev-embed}
  A reversible Markov matrix\/ $M\in\cM_d$ with a minimal polynomial
  of degree\/ $m\leqslant d$ is reversibly embeddable if and only if
  the following two conditions are satisfied.
\begin{enumerate}\itemsep=2pt
  \item The spectrum of\/ $M$ is positive, $\sigma(M) \subset \RR_{+}$.
  \item The coefficients\/ $\alpha^{}_{1}, \ldots , \alpha^{}_{m-1}$
    that solve Eq.~\eqref{eq:VDM} are such that\/ $L = \log(\one+A)$
    from Eq.~\eqref{eq:L-poly} is a Markov generator.
\end{enumerate}
In this case, $L$ is unique in the sense that no other reversible real 
logarithm of\/ $M$ exists.  \qed
\end{prop}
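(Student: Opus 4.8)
The plan is to prove Proposition~\ref{prop:rev-embed} by showing that its two conditions are equivalent to reversible embeddability, drawing almost entirely on machinery already assembled, so that the main work is reorganisation rather than fresh computation. By Definition~\ref{def:embeddable}, $M$ is reversibly embeddable exactly when $M = \ee^Q$ for some (weakly) reversible Markov generator $Q$. Corollary~\ref{coro:rev-embed} already tells us that, for a reversible $M$ with positive spectrum, this happens if and only if $L = \log(\one+A)$ is a Markov generator. So the heart of the argument is to (i) justify that positivity of the spectrum is genuinely necessary, not just assumed, and (ii) translate the condition ``$L$ is a Markov generator'' into the concrete solvability statement about Eq.~\eqref{eq:VDM}.

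First I would dispose of necessity of condition~(1). If $M$ is reversibly embeddable, then by definition $M = \ee^R$ with $R$ a reversible generator, and Fact~\ref{fact:rev-exp} forces $\sigma(M)\subset\RR_{+}$. This is the SMT-based obstruction already isolated in Lemma~\ref{lem:url}, so it is immediate. Conversely, if the spectrum is positive, Fact~\ref{fact:principal} guarantees that the principal logarithm $L$ exists as a real matrix with zero row sums, and Lemma~\ref{lem:principal} guarantees it is $\bs{p}\ts$-reversible with the same $\bs{p}$ as $M$. Thus the only remaining obstruction to reversible embeddability is whether this $L$ has non-negative off-diagonal entries, i.e.\ whether it is actually a Markov generator.

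The substantive step is to explain why the coefficients $\alpha^{}_{1},\ldots,\alpha^{}_{m-1}$ appearing in condition~(2) are well defined and uniquely pin down $L$. Here I would invoke the polynomial representation $L = \sum_{k=1}^{m-1}\alpha^{}_{k}A^{k}$ from Eq.~\eqref{eq:L-poly}, valid by the Cayley--Hamilton reduction in Fact~\ref{fact:principal} together with the vanishing of the $A^0$ term (forced by the zero-row-sum property). Applying the SMT eigenvalue by eigenvalue to the distinct eigenvalues $\mu^{}_{1},\ldots,\mu^{}_{m-1}$ of $A$ yields precisely the linear system~\eqref{eq:VDM}. The displayed (confluent-free) Vandermonde matrix has determinant $\bigl(\prod_i\mu^{}_i\bigr)\prod_{k>\ell}(\mu^{}_k-\mu^{}_\ell)$, which is nonzero because the $\mu^{}_i$ are nonzero (the spectrum lies in $(0,1]$ so $\mu^{}_i<0$ for $i\geqslant 1$) and pairwise distinct (these are the \emph{distinct} eigenvalues indexing the minimal polynomial). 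Hence the system has a unique real solution, determining $L$ completely; the matrix being a polynomial in $A$ means the eigenvalue data already fixes it. Condition~(2) then simply asks that this uniquely determined $L$ be a Markov generator, which by Corollary~\ref{coro:rev-embed} is equivalent to reversible embeddability.

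For the final uniqueness clause I would appeal directly to Lemma~\ref{lem:url}: once a $\bs{p}\ts$-reversible real logarithm exists, it asserts that no other $\bs{p}\ts$-reversible real logarithm of $M$ can exist, and that it coincides with the principal one $\log(\one+A)$. Since $M$ reversible forces a specific $\bs{p}>\bs{0}$ (and, in the reducible case, Remark~\ref{rem:freedom} shows $M$ is reversible for each of its equilibrium vectors, so the argument applies to whichever $\bs{p}$ one fixes), this yields the stated uniqueness of $L$. I expect the main conceptual obstacle to be the bookkeeping between the \emph{distinct} eigenvalues (there are $m-1$ nontrivial ones, matching the minimal polynomial degree) and the full multiset of eigenvalues of $M$: one must be careful that the SMT conditions reduce to exactly $m-1$ equations rather than $d-1$, since it is the minimal polynomial, not the characteristic polynomial, that controls the polynomial degree of $L$ and hence the size of the Vandermonde system. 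Everything else is a direct assembly of Facts~\ref{fact:rev-exp},~\ref{fact:principal} and Lemmas~\ref{lem:principal},~\ref{lem:url}.
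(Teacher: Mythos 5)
Your proposal is correct and follows essentially the same route as the paper: the paper also establishes the proposition by combining Corollary~\ref{coro:rev-embed} (reversible embeddability $\Leftrightarrow$ $L=\log(\one+A)$ is a Markov generator, with necessity of positive spectrum coming from Fact~\ref{fact:rev-exp}/Lemma~\ref{lem:url}) with the derivation of the Vandermonde system~\eqref{eq:VDM} from Fact~\ref{fact:principal} and the SMT, whose non-singularity (determinant $\bigl(\prod_i \mu^{}_i\bigr)\prod_{k>\ell}(\mu^{}_k-\mu^{}_\ell)\ne 0$) pins down the coefficients $\alpha^{}_k$ uniquely, and with uniqueness of $L$ again via Lemma~\ref{lem:url} and Remark~\ref{rem:freedom}. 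Your care about the system having $m-1$ equations (distinct nontrivial eigenvalues, governed by the minimal polynomial) matches the paper's own emphasis on this point.
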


By \cite[Prop.~7]{King}, for any $2\leqslant d\in\NN$, the set $\cE_d$
of embeddable Markov matrices is relatively closed within the set
$\{ M\in \cM_d : \det (M) > 0 \}$. In contrast, the set of reversible
Markov matrices is \emph{not} relatively closed, because the limit of
a converging sequence of reversible Markov matrices, provided that
also the corresponding probability vectors converge, might only be
weakly reversible. 

Likewise, we know from \cite[Thm.~7]{Davies} that
the set of embeddable Markov matrices with simple spectrum is
relatively open and dense in $\cE_d$. In contrast, this is no longer
true for embeddable reversible Markov matrices, because they have real
spectrum but can never have simple negative eigenvalues. Nevertheless,
we still get the following result.

\begin{theorem}\label{thm:pos-rev}
  If\/ $M\in\cM_d$ is reversible for some\/ $\bs{p}>\bs{0}$, the
  following are equivalent.
\begin{enumerate}\itemsep=2pt
\item $M$ is embeddable and has positive spectrum.
\item $M$ is embeddable with a\/ $\bs{p}\ts\ts$-reversible Markov
  generator.
\item $M$ has positive spectrum and its principal logarithm,
  $L = \log (\one + A)$ with\/ $A = M \nts - \one$, is a Markov
  generator.
\end{enumerate}
In this case, $L$ is the only reversible matrix logarithm of\/ $M$.

Beyond this case, when\/ $d \geqslant 3$, a reversible Markov matrix
can still be embeddable, but never with a reversible Markov generator.
\end{theorem}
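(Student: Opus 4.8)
The plan is to prove Theorem~\ref{thm:pos-rev} in two parts: the equivalence of the three conditions for the positive-spectrum case, and then the final dichotomy statement about negative eigenvalues.

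For the equivalence (1)$\Leftrightarrow$(2)$\Leftrightarrow$(3), the work is largely assembled already. First I would establish (3)$\Rightarrow$(2): if $\sigma(M)\subset\RR_{+}$ and $L=\log(\one+A)$ is a Markov generator, then by Lemma~\ref{lem:principal} this principal logarithm is automatically $\bs{p}\ts$-reversible (since $M$ is $\bs{p}\ts$-reversible with positive spectrum), so $M=\ee^L$ exhibits a reversible embedding. The implication (2)$\Rightarrow$(1) is trivial, since a $\bs{p}\ts$-reversible Markov generator is in particular a Markov generator, and positivity of the spectrum follows from Fact~\ref{fact:rev-exp} applied to the reversible logarithm. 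The remaining implication (1)$\Rightarrow$(3) is where Lemma~\ref{lem:url} does the heavy lifting: if $M$ is embeddable via $M=\ee^Q$ and has positive spectrum, then the principal logarithm $L=\log(\one+A)$ exists and is $\bs{p}\ts$-reversible by Lemma~\ref{lem:principal}; one must then argue that $L$ itself is a Markov generator. I expect to show this by comparing $Q$ and $L$: both commute with $M$, both are real logarithms, and one invokes the commutant argument from the proof of Lemma~\ref{lem:url} to conclude that among the real logarithms, the reversible one $L$ inherits the zero-row-sum and off-diagonal sign structure needed. The uniqueness clause ($L$ is the only reversible matrix logarithm) is exactly the final assertion of Lemma~\ref{lem:url}.

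For the final statement, the strategy is to reason purely at the level of spectra and the spectral mapping theorem. Suppose $d\geqslant 3$ and $M$ is reversible and embeddable but lies outside the positive-spectrum case, so $M$ has at least one negative eigenvalue; by Fact~\ref{fact:symm} the spectrum is real, and by Culver's theorem (Fact~\ref{fact:Culver}) embeddability forces every negative eigenvalue to occur with even multiplicity, which is why $d\geqslant 3$ (indeed $d\geqslant 4$ for a genuine pair with the eigenvalue $1$ present) is needed for such examples to exist at all. The key point is then a contradiction argument: if there were a reversible Markov generator $R$ with $M=\ee^R$, then Fact~\ref{fact:rev-exp} would force $\sigma(M)=\sigma(\ee^R)\subset\RR_{+}$, contradicting the presence of a negative eigenvalue. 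Hence embeddability with a reversible generator is impossible, even though an ordinary (non-reversible) real logarithm, and possibly a Markov embedding, may still exist.

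The main obstacle I anticipate is not the final dichotomy, which reduces cleanly to Fact~\ref{fact:rev-exp}, but rather the careful part of (1)$\Rightarrow$(3): establishing that the \emph{existence} of \emph{some} Markov generator $Q$ with $M=\ee^Q$ (not assumed reversible) actually forces the \emph{principal, reversible} logarithm $L$ to be a Markov generator too. The subtlety is that $Q$ and $L$ may differ when the spectrum has multiplicities, so one cannot simply identify them; instead one must exploit that $M$ has positive spectrum to ensure $L$ is real and reversible, and then verify that $L$ has non-negative off-diagonal entries. Here I would lean on the simultaneous diagonalisability and matching diagonal forms of $M$, $L$, and $Q$ established in Lemma~\ref{lem:url}, together with the fact that a reversible real logarithm of a matrix with positive spectrum is uniquely the principal one, so that the existence of \emph{any} Markov embedding, combined with the positivity of the spectrum, channels the reversible embedding through $L$ alone.
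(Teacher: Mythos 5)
Your treatment of the easy implications is fine: $(3)\Rightarrow(2)$ via Lemma~\ref{lem:principal}, $(2)\Rightarrow(1)$ via Fact~\ref{fact:rev-exp}, and the impossibility half of the final claim (no reversible generator outside the positive-spectrum case) is exactly the paper's use of Fact~\ref{fact:rev-exp}/Lemma~\ref{lem:url}. But there is a genuine gap at the heart of the proof, namely $(1)\Rightarrow(2)$ (equivalently $(1)\Rightarrow(3)$). You correctly identify that the given generator $Q$ and the principal logarithm $L$ may differ, but your proposed fix --- ``lean on the simultaneous diagonalisability and matching diagonal forms of $M$, $L$, $Q$'' together with the uniqueness of the reversible logarithm --- is not an argument: the commutant reasoning in Lemma~\ref{lem:url} only shows that any \emph{reversible} real logarithm must equal $L$; it gives no information about the off-diagonal signs of $L$ when the hypothesis supplies only a \emph{non-reversible} Markov generator $Q$. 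Uniqueness channels a reversible embedding through $L$ \emph{if one exists}, which is precisely what has to be proved. Indeed $Q\ne L$ genuinely occurs (Example~\ref{ex:super-strange}), so no identification of $Q$ with $L$ can close this. The paper closes it by an entirely different, topological argument that your proposal does not contain: Davies' theorem gives embeddable matrices with simple spectrum arbitrarily close to $M$; a deformation argument (from the proof of \cite[Thm.~3.1]{BCHJ}, applied blockwise after invoking Fact~\ref{fact:pos-p}) shows one can choose these approximants \emph{reversible}; by Proposition~\ref{prop:rev-simple} each such approximant is embeddable via its unique, reversible principal logarithm; and a compactness/limit argument shows these generators converge to a reversible Markov generator of $M$, which must be $L$ by Corollary~\ref{coro:rev-embed}. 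Nothing in your algebraic sketch substitutes for this density-and-limit step.

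Two smaller points. First, your parenthetical claim that ``$d\geqslant 4$'' is needed for a negative pair together with the eigenvalue $1$ is false: the spectrum $\{1,-\epsilon,-\epsilon\}$ fits in $d=3$, and Example~\ref{ex:strange} is exactly such a matrix; had your count been right, the theorem's final sentence would be vacuous for $d=3$. Second, the assertion that a reversible matrix outside the positive-spectrum case ``can still be embeddable'' is an existence claim and needs a witness; the paper supplies Example~\ref{ex:strange}, whereas your ``possibly a Markov embedding may still exist'' proves nothing.
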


\begin{proof}
  The implication $(2) \Rightarrow (1)$ is clear from
  Fact~\ref{fact:rev-exp}, while $(2) \Leftrightarrow (3)$ follows
  from Lemma~\ref{lem:url} by standard arguments.

  Next, we need to show that $(1) \Rightarrow (2)$. To this end,
  assume that $M\in\cM_d$ with $\sigma(M) \subset \RR_{+}$ is
  $\bs{p}\ts\ts$-reversible, and that $M = \ee^Q$ for some Markov
  generator $Q$. If $Q$ is $\bs{p}\ts\ts$-reversible itself, there is
  nothing to show, so assume that $Q$ is not reversible. By
  \cite[Thm.~7]{Davies}, we know that there are embeddable Markov
  matrices with simple spectrum in any neighbourhood of $M$ within
  $\cM_d$. In fact, there are also reversible ones among them, as
  follows from the following deformation argument, which is taken from
  the proof of \cite[Thm.~3.1]{BCHJ}.

  Let us first assume that $M$ is irreducible. Then, if $M$ is
  reversible, we can modify any of its rows by multiplying it with an
  arbitrary non-negative number such that the sum of the non-diagonal
  elements remains $\leqslant 1$, followed by modifying the diagonal
  element such that the row sum is again $1$. Similarly, we can 
  multiply any column with a number in $(0,1)$, so no row sum
  can exceed $1$, and then re-adjust all diagonal elements for unit 
  row sums.
  
  As follows from Kolmogorov's
  loop criterion, compare \cite{Kelly, BCHJ}, none of these operations
  destroys reversibility, as long as the resulting equilibrium vector
  (which may change) remains positive, which is certainly the case for
  sufficiently small deformations. This gives enough freedom to find,
  in any neighbourhood of $M$, a reversible matrix $M'$ with simple
  spectrum that is also embeddable. Then, the corresponding Markov
  generator is unique and given by the principal matrix logarithm of
  $M'$,

  More generally, $M$ reversible implies via Fact~\ref{fact:pos-p}
  that $M$ is the direct sum of irreducible Markov matrices. By
  permuting the states appropriately, we may assume that $M$ has
  diagonal block form. We can now employ the above deformation
  argument to each block separately. Since we have some freedom to
  choose $\bs{p}>\bs{0}$, as explained in Remark~\ref{rem:freedom}, we
  obtain the corresponding conclusion: in any neighbourhood of
  $M$, we can find an embeddable, reversible matrix $M'$ with simple
  spectrum and a nearby probability vector. For the latter,  without loss 
  of generality, we may assume that none of its entries is smaller than
  $\epsilon \defeq \frac{1}{2} \min_{i} p^{}_i > 0$, say.  The Markov 
  generator for $M'$ is again the principal matrix logarithm, which 
  has the same equilibrium vectors as $M'$; compare 
  Remark~\ref{rem:sets-of-p}.
  
  Now, we can select a converging sequence of embeddable reversible
  Markov matrices with simple spectrum that converges to $M$ such that
  also the corresponding equilibrium vectors converge to some
  $\bs{p}> \bs{0}$, which is possible by a standard compactness 
  argument involving the constant $\epsilon>0$.  
  All of these matrices are embeddable via 
  their principal logarithm, by Proposition~\ref{prop:rev-simple}, which is
  another converging sequence, this time of Markov generators. The
  limit is still a Markov generator, and it is reversible. Then, it
  must be the principal logarithm $L$ of $M$ by
  Corollary~\ref{coro:rev-embed}, which proves the claim.
  
  The uniqueness of $L$ follows from
  Proposition~\ref{prop:rev-simple} in conjunction with
  Lemma~\ref{lem:url} and Remark~\ref{rem:freedom}, because 
  $L$ must be  $\bs{p}\ts\ts$-reversible and thus also reversible 
  for any other equilibrium vector of $M$.
 
  The last claim is made explicit in Example~\ref{ex:strange} 
  below, which establishes the statement in line with 
  Lemma~\ref{lem:url}.
\end{proof}

\subsection{Multiple embeddings}
With the above results, Proposition~\ref{prop:rev-embed} becomes the right
tool to test all reversible Markov matrices with positive spectrum for
embeddability. When the spectrum is degenerate, we do not get
uniqueness. Let us look at this case more closely now.  In general,
when a reversible Markov matrix has more than one real logarithm,
non-reversible ones must come in pairs. Indeed, let
$M = \widetilde{M} = D^{-1} M^{\trans} D$ for some probability
vector $\bs{p}>\bs{0}$ with $D=D_{\bs{p}}$ as above, where $\bs{p}$
then is an equilibrium vector for $M$. Now, if $M = \ee^R$ for some
$R \in\Mat(d,\RR)$ and $\widetilde{R} = D^{-1} R^{\trans} D$, we
get
\[
  M \, = \, \widetilde{M} \, = D^{-1} \bigl(\ee^R\bigr)^{\!\trans}
  D \ts = \, \exp \bigl( D^{-1} R^{\trans} D\bigr) \, = \, \ee^{\widetilde{R}} ,
\]
where $\widetilde{R}$ is a Markov generator if and only if $R$ is one.
Indeed, if $R$ is a Markov generator, it must satisfy $\bs{1} R^{\trans}
= \bs{0}$ and share the equilibrium vector $\bs{p}$ with $M$,
so $\bs{p} R = \bs{0}$, by Remark~\ref{rem:sets-of-p}. Then, 
$\widetilde{R}$ has non-negative off-diagonal entries because $R$
does and $D$ is a non-negative matrix, and one gets
$\bs{p} \widetilde{R} = \bs{1} R^{\trans} D = \bs{0} D = \bs{0}$. 
Likewise, $\bs{1} \widetilde{R}^{\ts\trans} = \bs{p} R \ts D^{-1} =
\bs{0}  D^{-1}  = \bs{0}$, so $\widetilde{R}$ is a
rate matrix. The converse direction follows from 
$R = D^{-1} \widetilde{R}^{\ts\trans} D$ in the same way.

So, let us assume that $R$ is a rate matrix. Then,
we either have $\widetilde{R}=R$, which is thus reversible, or we
get a $\bs{p}\ts\ts$-balanced pair with $\widetilde{R}\ne R$, where
each is the time reversal of the other.
Clearly, $R+\nts\widetilde{R}$ is $\bs{p}\ts\ts$-reversible, as
$\,\widetilde{.}\,$ is an involution.  Then, if
$[\widetilde{R}, R]=\nix$, we get
\[
   M^2 \, = \, \ee^R \ee^{\widetilde{R}} \ts = \, \ee^{R+\nts\widetilde{R}},
\]
and Fact~\ref{fact:rev-exp} implies that $M^2$ is a reversible Markov
matrix with positive spectrum. Now, $M^2$ has a unique square root
with positive eigenvalues by \cite[Cor.~6]{JOR}, say $M'$. If $M$ has
positive spectrum itself, we get $M=M'$; otherwise, $M$ and $M'$ are
different square roots of $M^2$. One consequence is the following.

\begin{lemma}\label{lem:pair}
  Let\/ $M\in\cM_d$ be reversible, and embeddable via a 
  $\bs{p}\ts\ts$-balanced pair of Markov generators, $R$ and\/ 
  $\widetilde{R}$, with\/ $[\widetilde{R}, R \ts ]=\nix$ and\/
  $R\ne\widetilde{R}$. Then,
  $R+ \nts \widetilde{R}$ is a $\bs{p}\ts\ts$-reversible Markov generator
  and $M^2 = \ee^{2R} = \ee^{2\widetilde{R}}$ is also reversibly embedded 
  as\/ $M^2 = \exp(R + \nts \widetilde{R} \,)$. Further, $M^2$ has positive
  spectrum and possesses a unique Markov square root with positive
  spectrum. This square root is reversibly embedded via the
  Markov generator\/ $\frac{1}{2} (R + \nts \widetilde{R} \, )$.  \qed
\end{lemma}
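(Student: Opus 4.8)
The plan is to collect the observations made in the paragraph preceding the statement and to add the explicit identification of the square root. First I would record that, as $R$ and $\widetilde{R}$ are both Markov generators, their sum $R + \widetilde{R}$ inherits non-negative off-diagonal entries and zero row sums, hence is again a Markov generator; it is $\bs{p}\ts\ts$-reversible because the involution $\widetilde{\,\cdot\,}$ is linear and fixes it, via $\widetilde{R + \widetilde{R}} = \widetilde{R} + R$. Scaling by $\tfrac{1}{2}$ preserves both properties, so $\tfrac{1}{2}(R + \widetilde{R})$ is a $\bs{p}\ts\ts$-reversible Markov generator as well.

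Next, from $M = \ee^R = \ee^{\widetilde{R}}$ I would obtain $M^2 = \ee^{2R} = \ee^{2\widetilde{R}}$, and then use the commutativity assumption $[\widetilde{R}, R] = \nix$ together with the additivity of the exponential for commuting matrices to write $M^2 = \ee^R \ee^{\widetilde{R}} = \ee^{R + \widetilde{R}}$. This displays $M^2$ as the exponential of the reversible Markov generator $R + \widetilde{R}$, so $M^2$ is reversibly embedded; Fact~\ref{fact:rev-exp} then yields at once that $M^2$ is $\bs{p}\ts\ts$-reversible with positive spectrum, $\sigma(M^2) \subset \RR_{+}$.

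For the square root, I would invoke \cite[Cor.~6]{JOR} to get a unique Markov matrix of positive spectrum whose square is $M^2$, and then identify it. Setting $S = \exp\bigl(\tfrac{1}{2}(R + \widetilde{R})\bigr)$, the commutativity gives $S^2 = \ee^{R + \widetilde{R}} = M^2$, so $S$ is a Markov square root of $M^2$; and since $\tfrac{1}{2}(R + \widetilde{R})$ is a reversible Markov generator, Fact~\ref{fact:rev-exp} shows that $S$ is $\bs{p}\ts\ts$-reversible with positive spectrum. By the uniqueness clause of \cite[Cor.~6]{JOR}, $S$ must coincide with the distinguished positive-spectrum square root, which is thus reversibly embedded via $\tfrac{1}{2}(R + \widetilde{R})$. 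The only point requiring a little care is precisely this last identification: one has to check that $\tfrac{1}{2}(R + \widetilde{R})$ exponentiates not merely to \emph{some} square root but to a Markov matrix with positive spectrum, so that the uniqueness theorem can be applied; all the remaining equalities follow directly from the additivity of the exponential under commutativity and from Fact~\ref{fact:rev-exp}.
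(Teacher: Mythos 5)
Your proposal is correct and follows essentially the same route as the paper, whose ``proof'' of this lemma is precisely the discussion in the paragraph preceding it: reversibility of $R+\nts\widetilde{R}$ via linearity of the involution, the identity $M^2 = \ee^R \ee^{\widetilde{R}} = \ee^{R+\nts\widetilde{R}}$ from commutativity, Fact~\ref{fact:rev-exp} for positive spectrum, and \cite[Cor.~6]{JOR} for the unique positive-spectrum square root. Your only addition is to spell out the identification of $\exp\bigl(\tfrac{1}{2}(R+\nts\widetilde{R}\,)\bigr)$ with that unique square root, a step the paper leaves implicit, and you carry it out correctly.
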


Note that this result also applies if we start from a Markov matrix with
negative eigenvalues, which can be embeddable when each negative
eigenvalue appears with even multiplicity. Clearly, $d=3$ is the
smallest dimension where this can occur, and the embedding 
for such matrices was only settled relatively recently in \cite{Carette}; see
\cite[Sec.~3.1]{BS3} for a simplified derivation.

\begin{example}\label{ex:strange}
  Set $\epsilon = \ee^{-\pi \sqrt{3}}\approx 0.163034$ and consider
  the symmetric Markov matrix
\[
  M \, = \, \myfrac{1}{3} \begin{pmatrix}
    1-2\epsilon & 1+\epsilon & 1+\epsilon \\
    1+\epsilon & 1-2\epsilon & 1+\epsilon \\
    1+\epsilon & 1+\epsilon & 1-2\epsilon \end{pmatrix},
\]
which is reversible with $\bs{p} = \frac{1}{3} (1,1,1)$, hence
$D_{\bs{p}} = \frac{1}{3} \one$. Here, $M$ cannot be reversibly 
embeddable, because its spectrum is $\sigma(M) = \{1,
-\epsilon,-\epsilon\}$, so it lies outside the case covered
by Corollary~\ref{coro:symm}. Still, it is embeddable as
$M = \ee^{Q_{+}} = \ee^{Q_{-}}$ with the two Markov generators
\[
  Q^{}_{+} \, = \, \myfrac{2 \pi}{\sqrt{3}\ts} \begin{pmatrix}
    -1 & 1 & 0 \\ 0 & -1 & 1 \\ 1 & 0 & -1 \end{pmatrix}
  \quad \text{and} \quad
   Q^{}_{-} \, = \, \myfrac{2 \pi}{\sqrt{3}\ts} \begin{pmatrix}
    -1 & 0 & 1 \\ 1 & -1 & 0 \\ 0 & 1 & -1 \end{pmatrix},
\]
see \cite{BS1,BS3} and references therein for details. An 
independent approach is given in Appendix~\ref{sec:app}. Clearly,
$Q^{}_{+} = Q^{\trans}_{-}$, so they form a $\bs{p}\ts\ts$-balanced
pair, and it is easy to check that they commute,
$[Q^{}_{+}, Q^{}_{-}]=\nix$, which means that Lemma~\ref{lem:pair}
applies.
  
Now, $Q^{}_{+} \nts + \ts Q^{}_{-}$ is a $\bs{p}\ts\ts$-reversible Markov
generator of equal-input type (see Example~\ref{ex:equal-input}), and
we get $M^2 = \exp (Q^{}_{+} \nts + Q^{}_{-})$ with spectrum
$\{ 1, \epsilon^2,\epsilon^2\}$. The unique square root of $M^2$ with
positive spectrum $\{ 1, \epsilon,\epsilon\}$ is the matrix $M'$ that
emerges from $M$ by changing $\epsilon$ to $-\epsilon$. Note that
$M^2 = \ee^{2 Q_{+}} = \ee^{2 Q_{-}}$ is also reversibly embedded via 
$Q^{}_{+} \nts + Q^{}_{-}$, so we have an example
with positive spectrum and three embeddings.

More generally, the equal-input matrices from \cite[Prop.~3.8]{BS3} 
have a double negative eigenvalue and are reversible. They are also
embeddable, at least with one $\bs{p}\ts\ts$-balanced pair
$Q^{}_{\pm}$ of Markov generators, though there may be several
pairs, as derived in \cite{BS3} and in Appendix~\ref{sec:app}.
One can check that they always
satisfy $[Q^{}_{+} , Q^{}_{-}] = \nix$, so we are in the same
situation as above. As was derived in \cite[Sec.~3.1]{BS3}, this is
the only situation with a double negative eigenvalue that can occur
for $d=3$. \exend
\end{example}

To continue with multiple negative eigenvalues, let us consider
$B = - \one \in \mathrm{SL} (2, \RR)$, which is reversible for
$\bs{p} = \bigl( \frac{1}{2}, \frac{1}{2} \bigr)$ via 
$\widetilde{B} = B^{\trans} = B$.  The matrix $B$ has multiple 
real logarithms by Fact~\ref{fact:Culver}, which we now determine 
completely. Clearly, if $B = \ee^R$, the spectrum of $R \in \Mat(2,\RR)$ 
must be $\sigma (R) = \{ \pm (2m+1) \pi \ii \}$ for some arbitrary, but
fixed $m\in\ZZ$. We may thus write $R = (2m+1) I$, where $I$ is a
real matrix with $\sigma (I) = \{ \pm \ii \}$ and $I^2 = - \one$. In 
particular,  $\det (I) = 1$ and $I \in \mathrm{SL} (2, \RR)$. Also, one gets
\[
    \exp \bigl( (2m+1) \pi I \bigr) \, = \,
    \cos \bigl( (2m+1) \pi \bigr) \one + 
    \sin \bigl( (2m+1) \pi \bigr) I \, = \, - \one \ts ,
\]
and we have reduced our problem to finding all real square roots
of $-\one$.

Set $ I = 
\left( \begin{smallmatrix} a & b \\ c & d \end{smallmatrix} \right)$
and consider $I^2 = -\one$. As one can easily verify, $b=c=0$ does 
not lead to any real solution, so we need $a+d=0$ together with
$bc = - (a^2 +1) < 0$, which gives
\begin{equation}\label{eq:roots}
    I \, = \, \begin{pmatrix} a & b \\ - \frac{a^2 + 1}{b} & -a \end{pmatrix} .
\end{equation}
Each such $I$ together with $I^{\trans}$ gives a $\bs{p}\ts\ts$-balanced
pair, with $I \ne I^{\trans}$. Moreover, one has $[I , I^{\trans}] = \nix$
if and only if $a=0$ together with $b = \pm 1$, in which case one
has $I^{\trans} = I^3$. Put together, we have the following.

\begin{fact}\label{fact:roots}
   The matrix\/ $B = -\one \in \mathrm{SL} (2,\RR)$ is\/
   $\bs{p}\ts\ts$-reversible for\/ $\bs{p} = \bigl( \frac{1}{2},
   \frac{1}{2} \bigr)$, but has no reversible real logarithm.
   Instead, its real logarithms come in\/ $\bs{p}\ts\ts$-balanced
   pairs\/ $\{ R, R^{\trans} \}$ with\/ $R = (2m+1) \pi I$ for 
   some\/ $m\in\ZZ$ and\/ $I$ as in \eqref{eq:roots},
   with\/ $a,b \in \RR$ and\/ $b\ne 0$. 
   
   They commute if and only if\/ $a=0$ and\/ $b = \pm 1$.  \qed
\end{fact}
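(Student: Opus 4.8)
The plan is to prove Fact~\ref{fact:roots} by reducing the exponential equation $B=\ee^R$ to a purely algebraic problem about real square roots of $-\one$, then analysing commutativity. First I would establish the spectral constraint: since $B=-\one$ has $\sigma(B)=\{-1,-1\}$, the spectral mapping theorem forces any real logarithm $R$ to have $\sigma(R)\subset\{\,\pm(2m+1)\pi\ii : m\in\ZZ\,\}$, and since $R$ is real and $2\times 2$ its two eigenvalues must be complex conjugates, hence of the form $\pm(2m+1)\pi\ii$ for a single fixed $m$. Writing $R=(2m+1)\pi I$ with $\sigma(I)=\{\pm\ii\}$ reduces everything to $I^2=-\one$, using the computation $\exp((2m+1)\pi I)=\cos((2m+1)\pi)\one+\sin((2m+1)\pi)I=-\one$ already displayed in the text.

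The next step is to solve $I^2=-\one$ explicitly for $I=\left(\begin{smallmatrix} a & b \\ c & d\end{smallmatrix}\right)$. Expanding $I^2=-\one$ gives $a^2+bc=-1$, $d^2+bc=-1$, and $b(a+d)=c(a+d)=0$. The case $b=c=0$ yields $a^2=d^2=-1$, which has no real solution, so I would dismiss it and conclude $a+d=0$, i.e.\ $d=-a$, whence $bc=-(a^2+1)<0$; in particular $b\ne 0$ and $c=-(a^2+1)/b$, giving the parametrisation in \eqref{eq:roots}. This simultaneously shows $\det(I)=-ad-bc=a^2+(a^2+1)=1$, so $I\in\mathrm{SL}(2,\RR)$, and that each such $I$ satisfies $I\ne I^{\trans}$ (since $b\ne c$ would fail only if $b=c$, but $b=c$ forces $b^2=-(a^2+1)<0$, impossible).

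For the reversibility claim, since $D_{\bs{p}}=\frac12\one$ here, $\bs{p}\ts$-reversibility of $R$ means $R=R^{\trans}$, i.e.\ $I$ symmetric; but symmetry forces $b=c$, which we just ruled out. Hence no real logarithm is reversible, while each pair $\{R,R^{\trans}\}=\{(2m+1)\pi I,(2m+1)\pi I^{\trans}\}$ is $\bs{p}\ts$-balanced because $\widetilde{R}=D^{-1}R^{\trans}D=R^{\trans}$. The final step is the commutativity criterion: I would compute $[I,I^{\trans}]$ directly from \eqref{eq:roots} and set it to zero. A short calculation shows the off-diagonal entries of the commutator vanish precisely when $a\bigl(b+\tfrac{a^2+1}{b}\bigr)=0$; since the second factor is strictly positive, this forces $a=0$, and then $bc=-1$ with $d=-a=0$ gives $c=-1/b$, so $I=\left(\begin{smallmatrix} 0 & b \\ -1/b & 0\end{smallmatrix}\right)$. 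Requiring $I^{\trans}=I^3$ (or equivalently checking that the resulting pair closes up as stated) pins down $b=\pm1$.

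The main obstacle I anticipate is not conceptual but bookkeeping in the commutator computation: one must verify carefully that $[I,I^{\trans}]=\nix$ is equivalent to $a=0$ and $b=\pm1$, and in particular confirm the claimed relation $I^{\trans}=I^3$ in that degenerate case, which serves as a useful consistency check that the balanced pair is genuinely generated by powers of a single logarithm. Everything else follows mechanically from the spectral mapping theorem and the explicit $2\times 2$ algebra, with Fact~\ref{fact:symm} and Fact~\ref{fact:rev-exp} guaranteeing that no reversible logarithm can exist once we know $-\one$ has negative spectrum.
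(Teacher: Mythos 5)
Your overall route is the same as the paper's: use the spectral mapping theorem to write any real logarithm as $R = (2m+1)\pi I$ with $\sigma(I) = \{\pm\ii\}$, reduce to the algebraic equation $I^2 = -\one$, solve it to obtain the parametrisation \eqref{eq:roots}, note that no solution is symmetric (which, since $D_{\bs{p}} = \frac{1}{2}\one$, rules out reversible logarithms, consistently with Fact~\ref{fact:rev-exp}), and then analyse $[I, I^{\trans}]$.

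However, your commutativity step has a genuine, if small, gap. You set only the \emph{off-diagonal} entries of $[I,I^{\trans}]$ to zero, which indeed forces $a=0$, but this alone does not give $b=\pm 1$: for $a=0$, $b=2$, $c=-\frac{1}{2}$ the matrices still fail to commute. What pins down $b$ is the \emph{diagonal} part of the commutator, namely $\pm(b^2-c^2)$, which for $a=0$ and $c=-1/b$ reads $\pm\bigl(b^2 - b^{-2}\bigr)$ and vanishes precisely when $b^4=1$, that is, $b=\pm 1$. Your substitute, ``requiring $I^{\trans}=I^3$'', cannot simply be imposed: it is a \emph{consequence} of commutativity (as the paper notes), not a defining condition, so using it to conclude $b=\pm1$ makes the ``only if'' direction circular. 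The fix is the one-line diagonal computation above. Two further slips, both harmless: $\det(I) = ad - bc = -a^2 + (a^2+1) = 1$, whereas your formula $-ad-bc$ and the evaluation $a^2+(a^2+1)=1$ are both off; and the factor $b + \frac{a^2+1}{b}$ is nonzero but has the sign of $b$, so ``strictly positive'' holds only for $b>0$ --- nonvanishing is what your argument actually needs.
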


Now, we need to see whether the analogous situation occurs
among Markov matrices. The non-commutative case cannot 
happen for $d=3$, as discussed in Example~\ref{ex:strange}.
Fortunately, it suffices to search within the class of doubly
stochastic matrices from Example~\ref{ex:symm}, which allows
an approach based on permutation symmetries. Since we need 
some cyclic structure in the rate matrix (to obtain complex
eigenvalues), but also non-commutativity, we replace the cyclic 
group $C_3$, which underlies our previous example as detailed in 
Appendix~\ref{sec:app}, by the dihedral group 
\[
     D_4 \, = \,  \langle a, b : a^4 = b^2 = (ab)^2 = e \rangle \ts , 
\]
where $e$ denotes the neutral element. Concretely, we use
the permutations $a=(1234)$ and $b=(12)(34)$. Then, starting 
from the standard 4D permutation representation, we consider 
the algebra generated by these matrices, and make an ansatz for 
our generator that is mildly quadratic in the generating elements, 
namely $Q=\alpha (a-e) + \beta (b-e) + \gamma (a^2-e)$, and 
determine the parameters such that the spectrum becomes
$\sigma (Q) = \{ 0, -2 \ts \sqrt{3} \pi, \pm \ts \pi \ii - 2 \sqrt{3} \ts \pi \}$.
This means $\alpha=\frac{2\pi}{\sqrt{3}}$, $\beta = \frac{\pi}{\sqrt{3}}$
and $\gamma = \frac{3\pi}{2\sqrt{3}}$, and gives the following.

\begin{example}\label{ex:super-strange}
  Consider the Markov generator
\[
    Q \, = \, \myfrac{\pi}{2\sqrt{3}} \begin{pmatrix}
      -9 & 6 & 3 & 0 \\ 2 & -9 & 4 & 3 \\
      3 & 0 & -9 & 6 \\ 4 & 3 & 2 & -9 \end{pmatrix} , 
\]  
which is doubly stochastic, but not symmetric, so 
$\widetilde{Q} = Q^{\trans}\ne Q$. One can check that
\[
   [ Q , Q^{\trans} \ts ] \, = \, \myfrac{4 \pi^2}{3} \begin{pmatrix}
      1 & 0 & -1 & 0 \\ 0 & -1 & 0 & 1 \\
      -1 & 0 & 1 & 0 \\ 0 & 1 & 0 & -1 \end{pmatrix} \, \ne \, \nix \ts ,
\] 
which differs from the situation in Example~\ref{ex:strange}.
On the other hand, one can compute the matrix exponentials of
$Q$ and $Q^{\trans}$, which gives
\[
    M \, = \, \ee^Q \, = \, \ee^{Q^{\trans}} = \, \myfrac{1}{4} \begin{pmatrix}
    1-\ve & 1-\ve & 1+3\ve & 1-\ve \\ 1-\ve & 1-\ve & 1-\ve & 1+3\ve \\
    1+3\ve & 1-\ve & 1-\ve & 1-\ve \\ 1-\ve & 1+3\ve & 1-\ve & 1-\ve
    \end{pmatrix}
\]
with $\ve = \ee^{-2\sqrt{3}\ts \pi} \approx 1.87785 \cdot 10^{-5}$, where 
$\ve = \epsilon^2$ with the constant $\epsilon$ from Example~\ref{ex:strange}
and $\det (M) = \ve^3 = \ee^{-6\sqrt{3}\ts \pi} 
\approx 6.62194 \cdot 10^{-15}$. Here, $M$ is symmetric, hence
reversible for $\bs{p} = \frac{1}{4} \bs{1}$, with spectrum 
$\sigma (M) = \{ 1, \ve, -\ve, -\ve \}$.
We thus have an example of a reversible Markov matrix that is
embeddable via a $\bs{p}\ts\ts$-balanced pair of non-commuting Markov 
generators.

Now, $M^2$ has positive spectrum, and reads
\[
     M^2 = \, \myfrac{1}{4} \begin{pmatrix}
     1+3\ve^2 & 1-\ve^2 & 1-\ve^2 & 1-\ve^2 \\
     1-\ve^2 & 1+3\ve^2 & 1-\ve^2 & 1-\ve^2 \\
     1-\ve^2 & 1-\ve^2 & 1+3\ve^2 & 1-\ve^2 \\
     1-\ve^2 & 1-\ve^2 & 1-\ve^2 & 1+3\ve^2 \end{pmatrix} ,
\]
which is not only embeddable via $2 Q$ and $2 Q^{\trans}$, so
\[
    M^2 \ts = \, \ee^{2 Q}  \ts = \, \ee^{2 Q^{\trans}} \nts = \,
    \ee^Q \ee^{Q^{\trans}} \nts = \, \ee^{Q^{\trans}} \! \ee^Q 
    \ts \ne \, \ee^{Q+Q^{\trans}},
\]   
but also via the principal matrix logarithm of $M^2$, which reads
\[
    L \, = \, \sqrt{3}\ts \pi \begin{pmatrix}
        -3 & 1 & 1 & 1 \\ 1 & -3 & 1 & 1 \\
        1 & 1 & -3 & 1 \\ 1 & 1 & 1 & -3 \end{pmatrix}
        \, \ne \, Q + Q^{\trans} .    
\]
Note that $L$ is a constant-input matrix, which is a special case of
the equal-input matrices considered in Example~\ref{ex:equal-input}.
So, we also have an example of a symmetric (and hence reversible)
Markov matrix with positive spectrum that has three embeddings, one
with a reversible rate matrix, and two other ones via a (non-commuting)
$\bs{p}\ts\ts$-balanced  pair.
\exend
\end{example}

As our examples show, a reversible Markov matrix $M$ can have
non-reversible embeddings as well as multiple embeddings, and
the generators of a balanced pair may or may not commute. This
has to be determined on an individual basis, where multiple
solutions can only occur for sufficiently small values of the 
determinant of $M$.

\subsection{Outlook}
Many practically occurring Markov matrices are reversible, and
if an estimated one fails to be reversible, one can determine the 
nearest reversible one, at least numerically \cite{NW}. 
If a specific family of reversible matrices is investigated, additional
tools might simplify its embeddability question, as we saw for the
equal-input matrices \cite{BS2}. There are certainly other important matrix
classes that could and should be analysed along these lines. 

However, what we have discussed above can only be the first step,
as it considers the time-homogeneous case. More generally,
it is of interest whether any given reversible Markov matrix appears
in a continuous-time Markov flow, as one obtains from the initial value
problem $\dot{X} = X Q$ with $X(0)=\one$ with a time-dependent
family $Q = Q(t)$ of Markov generators. As long as $[ Q(t), Q(s)]=\nix$
holds for all $t,s \geqslant 0$, no new cases emerge. But when
they fail to commute, an entirely new chapter is opened; see
\cite{Joh73,BS4} and references therein for a more detailed discussion. 
This extension certainly deserves further investigations in the future.

\appendix
\section{Poisson process approach to
    Example~\ref{ex:strange}}\label{sec:app}

Consider the matrix
\[
    M (\delta) \, \defeq \, \myfrac{1}{3} \begin{pmatrix}
    1-2\delta & 1+\delta & 1+\delta \\
    1+\delta & 1-2\delta & 1+\delta \\
    1+\delta & 1+\delta & 1-2\delta \end{pmatrix},
\]
for $0<\delta\leqslant \frac{1}{2}$, where $M(\delta)$ is Markov, and
the Markov generators
\[
 Q^{}_{+} (\lambda) \, = \, \lambda \begin{pmatrix}
    -1 & 1 & 0 \\ 0 & -1 & 1 \\ 1 & 0 & -1 \end{pmatrix}
  \quad \text{and} \quad
   Q^{}_{-} (\lambda) \, = \, \lambda \begin{pmatrix}
    -1 & 0 & 1 \\ 1 & -1 & 0 \\ 0 & 1 & -1 \end{pmatrix}
\]
with $\lambda > 0$.  We know from Example~\ref{ex:strange} that
$\lambda = 2 \pi/\sqrt{3}$ gives
$\ee^{ Q^{}_{+} (\lambda) } = \ee^{ Q^{}_{-} (\lambda) } =
M(\epsilon)$ with $\epsilon = \ee^{- \pi \sqrt{3}}$. More generally,
it follows from \cite{BS3} that $M(\delta)$ is embeddable for
$0 < \delta \leqslant \epsilon$, but not embeddable for
$\delta > \epsilon$. The corresponding continuous-time processes
$\bigl( X^{+}_{t} \bigr)_{t\geqslant 0}$ and
$\bigl( X^{-}_{t} \bigr)_{t\geqslant 0}$ possess the transition graphs
\begin{equation}\label{eq:process}
\raisebox{-30pt}{\small
\begin{tikzpicture}[->,>=stealth',shorten >=1pt,auto,node distance=2.2cm,
                    thin]
  \tikzstyle{every state}=[fill=none,text=black]
  
  \node[state] (A)                    {$0$};
  \node[state] (B) [right of=A]       {$1$};
  \node[state] (C) [right of=B]       {$2$};

  \draw[->, semithick] (A) to node[above] {$\lambda$} (B);
  \draw[->, semithick] (B) to node[above] {$\lambda$} (C);
  \draw[<-, semithick] (A) to[bend right=50] node[above] {$\lambda$} (C);

  \node[state] (D) [right of=C]       {$0$};
  \node[state] (E) [right of=D]       {$1$};
  \node[state] (F) [right of=E]       {$2$};

  \draw[<-, semithick] (D) to node[above] {$\lambda$} (E);
  \draw[->, semithick] (F) to node[above] {$\lambda$} (E);
  \draw[<-, semithick] (F) to[bend left=50] node[above] {$\lambda$} (D);
\end{tikzpicture}}
\end{equation}
and have an interpretation as clockwise or anti-clockwise cycles.

For the further derivation, we need a split of the power series
of the exponential function as $\ee^x = f^{}_{0} (x) + f^{}_{1} (x)
+ f^{}_{3} (x)$ with
\begin{equation}\label{eq:functions}
  f^{}_{\nts\ell} (x) \, = \sum_{m=0}^{\infty}
  \myfrac{x^{3m+\ell}}{(3m+\ell)!} \qquad \text{for}
  \quad \ell \in \{ 0,1,2 \} \ts .
\end{equation}
These functions are well defined and given as follows.

\begin{lemma}\label{lem:functions}
  The power series of the functions\/ $f^{}_{\nts\ell}\ts$ from
  Eq.~\eqref{eq:functions} define entire functions. They are given by
\begin{align*}
  f^{}_{0} (x) \, & = \, \myfrac{1}{3} \Bigl( \ee^x +
      2 \ee^{-x/2} \cos \myfrac{\sqrt{3} \ts x}{2} \Bigr) , \\
  f^{}_{1} (x) \, & = \, \myfrac{1}{3} \Bigl( \ee^x -
      \ee^{-x/2} \Bigl(\cos\myfrac{\sqrt{3} \ts x}{2}
      + \sqrt{3} \sin \myfrac{\sqrt{3} \ts x}{2} \Bigr) \Bigr) , \\
  f^{}_{2} (x) \, & = \, \myfrac{1}{3} \Bigl( \ee^x -
      \ee^{-x/2} \Bigl(\cos\myfrac{\sqrt{3} \ts x}{2}
      - \sqrt{3} \sin \myfrac{\sqrt{3} \ts x}{2} \Bigr) \Bigr) .
\end{align*}
\end{lemma}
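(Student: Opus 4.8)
The plan is to establish the three stated formulas for the entire functions $f^{}_{0}, f^{}_{1}, f^{}_{2}$ by exploiting the fact that they are precisely the projections of the exponential series onto the residue classes modulo $3$. The natural tool is the discrete Fourier transform with respect to the cyclic group $C_3$, using the primitive cube roots of unity $\omega = \ee^{2\pi\ii/3}$ and $\omega^2 = \bar\omega$. First I would confirm well-definedness: each series in \eqref{eq:functions} is a subseries of the everywhere-convergent exponential series, so it converges absolutely on all of $\CC$ and defines an entire function, with no convergence subtleties to address.

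The key identity is the root-of-unity filter. For $\ell \in \{0,1,2\}$ one has
\begin{equation}\label{eq:filter}
   f^{}_{\nts\ell} (x) \, = \, \myfrac{1}{3} \sum_{j=0}^{2}
   \omega^{-j\ell} \ts \ee^{\ts\omega^{j} x} \ts ,
\end{equation}
because substituting $\ee^{\ts\omega^{j} x} = \sum_{n\geqslant 0} \omega^{jn} x^n / n!$ and summing over $j$ produces the factor $\sum_{j=0}^{2} \omega^{j(n-\ell)}$, which equals $3$ when $n \equiv \ell \pmod 3$ and vanishes otherwise. This isolates exactly the terms with $n = 3m+\ell$, reproducing \eqref{eq:functions}. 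I would then substitute $\omega = -\frac{1}{2} + \frac{\sqrt{3}}{2}\ii$ and $\omega^2 = -\frac{1}{2} - \frac{\sqrt{3}}{2}\ii$ into \eqref{eq:filter}, so that $\ee^{\ts\omega x}$ and $\ee^{\ts\omega^2 x}$ become $\ee^{-x/2}$ times the conjugate pair $\ee^{\pm\ii\sqrt{3}\ts x/2}$. Collecting the $j=1$ and $j=2$ terms using Euler's formula turns their sum into real combinations of $\cos\frac{\sqrt{3}\ts x}{2}$ and $\sin\frac{\sqrt{3}\ts x}{2}$, while the $j=0$ term contributes the $\ee^x$ summand.

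Carrying this out case by case gives the three claimed expressions: for $\ell=0$ the phase factors $\omega^{0}$ are trivial and the two conjugate terms add to $2\ee^{-x/2}\cos\frac{\sqrt{3}\ts x}{2}$; for $\ell=1$ the factors $\omega^{-1}=\omega^2$ and $\omega^{-2}=\omega$ multiply the conjugate exponentials and produce the mixed cosine--sine combination with coefficient $\sqrt{3}$ on the sine term; and for $\ell=2$ the roles of $\omega$ and $\omega^2$ swap, flipping the sign of the sine contribution. This is a routine, if slightly tedious, separation of real and imaginary parts.

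I do not expect a genuine obstacle here, as the argument is a standard application of the finite Fourier filter; the only point requiring mild care is bookkeeping the phase factors $\omega^{-j\ell}$ correctly when expanding the complex exponentials, since a sign error in the sine term would swap $f^{}_{1}$ and $f^{}_{2}$. As a safeguard I would verify the results at $x=0$ (where $f^{}_{0}(0)=1$ and $f^{}_{1}(0)=f^{}_{2}(0)=0$) and check the identity $f^{}_{0}+f^{}_{1}+f^{}_{2} = \ee^x$, both of which follow immediately from the stated formulas and confirm that the three residue-class projections have been assigned correctly.
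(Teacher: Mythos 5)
Your route is exactly the paper's own: the same root-of-unity filter (the paper obtains the identity $f^{}_{\ell}(x)=\frac{1}{3}\sum_{j=0}^{2}\omega^{-j\ell}\ee^{\omega^{j}x}$ by inverting $\ee^{\omega^{k}x}=f^{}_{0}(x)+\omega^{k}f^{}_{1}(x)+\overline{\omega}^{\ts k}f^{}_{2}(x)$), followed by Euler's identity, and that part of your argument is sound. The gap is in the final step, which you describe but never actually carry out: doing the real--imaginary separation correctly gives
\[
  f^{}_{1}(x)=\tfrac{1}{3}\Bigl(\ee^{x}-\ee^{-x/2}\bigl(\cos\tfrac{\sqrt{3}\ts x}{2}-\sqrt{3}\sin\tfrac{\sqrt{3}\ts x}{2}\bigr)\Bigr),
  \qquad
  f^{}_{2}(x)=\tfrac{1}{3}\Bigl(\ee^{x}-\ee^{-x/2}\bigl(\cos\tfrac{\sqrt{3}\ts x}{2}+\sqrt{3}\sin\tfrac{\sqrt{3}\ts x}{2}\bigr)\Bigr),
\]
since, for instance, $\overline{\omega}\,\ee^{\omega x}+\omega\,\ee^{\overline{\omega}x}=2\ts\ee^{-x/2}\cos\bigl(\tfrac{\sqrt{3}\ts x}{2}-\tfrac{2\pi}{3}\bigr)=\ee^{-x/2}\bigl(-\cos\tfrac{\sqrt{3}\ts x}{2}+\sqrt{3}\sin\tfrac{\sqrt{3}\ts x}{2}\bigr)$. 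These are the claimed expressions with $f^{}_{1}$ and $f^{}_{2}$ interchanged. So your sentence ``carrying this out case by case gives the three claimed expressions'' is not correct: the computation in fact reveals that the lemma as printed has the formulas for $f^{}_{1}$ and $f^{}_{2}$ swapped (the same slip occurs in the paper between the last display of its proof and the quoted result). The swap is confirmed by $f^{}_{1}(x)=x+O(x^{4})$ versus $f^{}_{2}(x)=\frac{x^{2}}{2}+O(x^{5})$, or by the identity $f^{\ts\prime}_{1}=f^{}_{0}$, which the printed $f^{}_{1}$ violates.

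What makes this a genuine gap rather than a cosmetic one is that this is precisely the error you yourself flagged as the risk (``a sign error in the sine term would swap $f^{}_{1}$ and $f^{}_{2}$''), yet both of your proposed safeguards are blind to it: the values at $x=0$ and the identity $f^{}_{0}+f^{}_{1}+f^{}_{2}=\ee^{x}$ are invariant under interchanging $f^{}_{1}$ and $f^{}_{2}$, because the sine contributions cancel in the sum no matter how they are assigned. A safeguard that does discriminate is the first-order behaviour at $0$ (namely $f^{\ts\prime}_{1}(0)=1$ but $f^{\ts\prime}_{2}(0)=0$), or the cyclic derivative identities $f^{\ts\prime}_{0}=f^{}_{2}$, $f^{\ts\prime}_{1}=f^{}_{0}$, $f^{\ts\prime}_{2}=f^{}_{1}$. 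Note that the downstream use in Appendix~\ref{sec:app} is unaffected by the swap: there one evaluates at $\lambda=2\pi/\sqrt{3}$, where $\sin(\sqrt{3}\ts\lambda/2)=\sin\pi=0$, so $p^{}_{1}(\lambda)=p^{}_{2}(\lambda)$ either way.
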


\begin{proof}
  The claim on analyticity is standard and justified by the absolute
  convergence of all series involved on all of $\CC$. To compute them,
  define
  $\omega = \ee^{2 \pi \ii /3} = - \frac{1}{2} + \frac{\ii}{2}
  \sqrt{3}$, where one has $\omb = \omega^2$ together with
  $1 + \omega + \omb = 0$. Now, observe that, for
  $k,\ell\in \{0,1,2\}$, one has
\[
  f^{}_{\nts\ell} (\omega^k x) \, = \,
  \omega^{k \ell} f^{}_{\nts\ell} (x) \ts ,
\]
hence $\ee^{\omega^k x} = f^{}_{0} (x) + \omega^k f^{}_{1} (x) +
\omb^k f^{}_{2} (x)$. This allows the following
computations,
\begin{align*}
  f^{}_{0} (x) \, & = \, \myfrac{1}{3} \bigl( \ee^x + \ee^{\omega x}
   + \ee^{\omb x} \bigr) \, = \, \myfrac{1}{3} \Bigl( \ee^x +
     \ee^{-x/2} \bigl( \ee^{\ii \sqrt{3} x /2} + \ee^{- \ii \sqrt{3} x/2}
        \bigr) \Bigr) ,   \\
  f^{}_{1} (x) \, & = \, \myfrac{1}{3} \bigl( \ee^x + \omb \ee^{\omega x}
    + \omega \ee^{\omb x} \bigr) \, = \, \myfrac{1}{3}
        \Bigl( \ee^x - \myfrac{1}{2} \bigl( \ee^{\omega x} +
          \ee^{\omb x} \bigr) - \myfrac{\ii}{2} \sqrt{3}
           \bigl( \ee^{\omega x} \nts - \ee^{\omb x}\bigr) \Bigr) , \\
  f^{}_{2} (x) \, & = \, \myfrac{1}{3} \bigl( \ee^x + \omega \ee^{\omega x}
  + \omb \ee^{\omb x} \bigr) \, = \, \myfrac{1}{3}
        \Bigl( \ee^x - \myfrac{1}{2} \bigl( \ee^{\omega x} +
          \ee^{\omb x} \bigr) + \myfrac{\ii}{2} \sqrt{3}
           \bigl( \ee^{\omega x} \nts - \ee^{\omb x}\bigr) \Bigr) ,
\end{align*}
from which the stated expressions follow via Euler's identity,
$\ee^{\ii y} = \cos (y) + \ii \sin (y)$.  
\end{proof}

We note in passing that, using trigonometric identities, the
expressions for $f^{}_{1}$ and $f^{}_{2}$ could be simplified to
\[
   \cos\myfrac{\sqrt{3} \ts x}{2} \pm \sqrt{3}
   \sin \myfrac{\sqrt{3} \ts x}{2} \, = \, \sin \Bigl(
   \myfrac{\pi}{6} \mp \myfrac{\sqrt{3}\ts x}{2} \Bigr),
\]
but this form would be less suitable for the coming computations. 

Now, let $(S_t )^{}_{t\geqslant 0}$ be an $\NN_0$-valued Poisson
process with parameter (or intensity) $\lambda$, which means that
$S_t$ is Poisson distributed with
$\PP (S_t = k) = \frac{(\lambda t)^k}{k !} \ts \ee^{-\lambda t}$.
Then, the distributions of $ X^{\pm}_{t}$ agree with those of
$\pm S_t \bmod 3 $, and we get
\[
\begin{split}
  p^{}_{0} (\lambda) \, & \defeq \, \PP \bigl( X^{+}_{1} = 0 \bigr) 
  \, = \, \PP \bigl( X^{-}_{1} = 0 \bigr) \, = \,
  \PP \bigl( S^{}_1 \in 3 \ts \NN_0 \bigr)  \\
  & = \, \ee^{-\lambda} \sum_{m=0}^{\infty} \myfrac{\lambda^{3m}}{(3m)!} 
  \, = \, \ee^{-\lambda} f^{}_{0} (\lambda)
  \, = \, \myfrac{1}{3} \Bigl( 1 + 2 \ts \ee^{-3\lambda/2}
     \cos \myfrac{\sqrt{3} \lambda}{2} \Bigr) .
\end{split}
\]
Analogously, we also get
\begin{align*}
  p^{}_{1} (\lambda) \, & = \, \PP \bigl( X^{+}_{1} = 1 \bigr) 
  \, = \,  \PP \bigl( X^{-}_{1} = 2 \bigr) \, = \,
  \PP \bigl( S^{}_1 \in 3 \ts \NN_0 + 1 \bigr)  \\
  & = \, \ee^{-\lambda} f^{}_{1} (\lambda) \, = \,
  \myfrac{1}{3} \Bigl( 1 - \ee^{-3\lambda/2} \Bigl(
  \cos\myfrac{\sqrt{3} \ts x}{2} + \sqrt{3} \sin
  \myfrac{\sqrt{3} \ts x}{2} \Bigr) \Bigr)  \\
\intertext{and}
  p^{}_{2} (\lambda) \, & = \, \PP \bigl( X^{+}_{1} = 2 \bigr) 
  \, = \, \PP \bigl( X^{-}_{1} = 1 \bigr) \, = \,
  \PP \bigl( S^{}_1 \in 3 \ts \NN_0 + 2 \bigr)  \\
  & = \, \ee^{-\lambda} f^{}_{2} (\lambda) \, = \,
  \myfrac{1}{3} \Bigl( 1 - \ee^{-3\lambda/2} \Bigl(
  \cos\myfrac{\sqrt{3} \ts x}{2} - \sqrt{3} \sin
  \myfrac{\sqrt{3} \ts x}{2} \Bigr) \Bigr) .
\end{align*}
Inserting $\lambda = 2 \pi / \sqrt{3}$ gives
$p^{}_{0} (\lambda) = \frac{1}{3} ( 1 - 2 \ts \epsilon)$ and
$p^{}_{1} (\lambda) = p^{}_{2} (\lambda) = \frac{1}{3} (1+\epsilon)$,
which provides an independent derivation of the claim in
Example~\ref{ex:strange}.

In fact, $\lambda = 2 \pi / \sqrt{3}$ is the \emph{smallest}
$\lambda > 0$ with $p^{}_{1} (\lambda) = p^{}_{2} (\lambda)$ and thus
the relation
$\PP \bigl( X^{+}_{1} =\ell \bigr) = \PP \bigl( X^{-}_{1} = \ell
\bigr)$ for $\ell \in \{ 0, 1, 2 \}$, which is equivalent to
$\ee^{Q^{+}(\lambda)}$ being a symmetric Markov matrix. This symmetry
precisely appears for $\lambda_k = 2 (2k+1) \pi/\sqrt{3}$ with
$k\in\NN_0$, then giving $\delta_k = \ee^{-(2k+1)\pi\sqrt{3}}$, with
$\delta^{}_{0} = \epsilon$. This sequence corresponds to the one derived
in \cite{BS3} via a minimisation procedure, and defines a decreasing
sequence with $\delta^{}_k > \delta^{}_{k+1}$ for all $k\in\NN_0$ and
$0$ as its limit point. Recall from \cite[Rem.~3.9]{BS3} that
$M(\delta)$ is embeddable for all $\delta \leqslant \delta^{}_{0}$,
with Markov generators that derive from the $Q^{\pm} (\delta^{}_{0})$
via a simple deformation. Then, when one reaches $\delta_k$, another
new pair of commuting generators emerges, and this process continues. 
Consequently, the number of embeddings of $M(\delta)$ increases without 
bound as $\delta \,\raisebox{2pt}{\tiny $\searrow$}\, 0$
or, equivalently, as the traces of the constant-input matrices
$M(\delta)$ approach $1$.

\section*{Acknowledgements}

This work was supported by the German Research Foundation (DFG),
within the CRC 1283/2 \mbox{(2021 - 317210226)} at Bielefeld
University.

\smallskip

\end{document}